\newcommand{\Aut}{\textrm{Aut}}
\newtheorem{theorem}{Theorem}[section]
\newtheorem{lemma}[theorem]{Lemma}
\newtheorem{proposition}[theorem]{Proposition}
\newtheorem{example}[theorem]{Example}
\newtheorem{definition}[theorem]{Definition}
\begin{document}

\title{Genus Bounds for Harmonic Group Actions on Finite Graphs}
\author{Scott Corry}

\address{Lawrence University, Department of Mathematics, 711 E. Boldt Way -- SPC 24, Appleton, WI 54911, USA}
\email{corrys@lawrence.edu}
\thanks{A portion of this research was carried out while the author was a Visiting Fellow at the Isaac Newton Institute for Mathematical Sciences in Cambridge, UK}

\maketitle

\begin{abstract}
This paper develops graph analogues of the genus bounds for the maximal size of an automorphism group of a compact Riemann surface of genus $g\ge 2$. Inspired by the work of M. Baker and S. Norine on harmonic morphisms between finite graphs, we motivate and define the notion of a harmonic group action. Denoting by $M(g)$ the maximal size of such a harmonic group action on a graph of genus $g\ge 2$, we prove that $4(g-1)\le M(g)\le 6(g-1)$, and these bounds are sharp in the sense that both are attained for infinitely many values of $g$. Moreover, we show that the values $4(g-1)$ and $6(g-1)$ are the only values taken by the function $M(g)$.
\end{abstract}

\section{Introduction}

In the papers \cite{BN1} and \cite{BN2}, M. Baker and S. Norine study the category of finite graphs with harmonic morphisms as a discrete analogue of the category of Riemann surfaces with holomorphic maps. The cornerstone of the analogy is the Riemann-Roch theorem of \cite{BN1}, which allows for an Abel-Jacobi theory concerning the map from a graph to its Jacobian (see also \cite{BHN} for a different approach). Baker and Norine go on to derive a Riemann-Hurwitz formula in \cite{BN2}, and use it to investigate hyperelliptic graphs and Weierstrass points.  This paper furthers the analogy by establishing graph analogues of the genus bounds for the maximal size of an automorphism group of a compact Riemann surface of genus $g\ge 2$. The classical story is as follows: for each $g\ge 2$, define
$$
N(g):= \max\{|\Aut(X)| \ | \ X \textrm{ is a compact Riemann surface of genus $g$}\}.
$$
Then $8(g+1)\le N(g)\le 84(g-1)$, and these bounds are sharp in the sense that both the upper and lower bound are attained for infinitely many values of $g$. The upper bound $84(g-1)$ was found by Hurwitz in \cite{H}, and the fact that it is sharp was proven by Macbeath in \cite{M}. The lower bound and its sharpness were proven independently by Accola in \cite{A} and Maclachlan in \cite{MacL}, and the techniques of \cite{A} provide the model for many of the arguments in the present paper. Indeed, once armed with the Riemann-Hurwitz formula from \cite{BN2}, the main complication in the graph-theoretic situation is the presence of vertical ramification, which has no analogue in the context of Riemann surfaces.

By a \emph{graph} we will always mean a finite, connected multigraph without loop edges. Thus, multiple edges between two vertices are allowed, but no vertex can have an edge to itself. We denote the edge-set and vertex-set of a graph $G$ by $E(G)$ and $V(G)$ respectively. As in the case of Riemann surfaces, we define the \emph{genus} of a graph $G$ to be the rank of its first Betti homology group, $g(G)=|E(G)|-|V(G)|+1$. This number is the dimension of the cycle-space of $G$, and is more commonly referred to as the cyclomatic number of $G$.

With this terminology, let $M(g)$ be the maximal size of a harmonic group action (see Definition \ref{Hgrpaction}) on any graph of genus $g\ge 2$. This paper proves the following theorem:

\begin{theorem} \label{Main}
For $g\ge 2$ we have $4(g-1)\le M(g)\le 6(g-1)$. In fact, $4(g-1)$ and $6(g-1)$ are the only values taken by the function $M(g)$, and both values are obtained for infinitely many values of $g$.
\end{theorem}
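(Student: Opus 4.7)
The plan is to apply the Riemann-Hurwitz formula for harmonic morphisms from \cite{BN2} to the quotient morphism $G\to G/\Gamma$ associated to a harmonic action of a finite group $\Gamma$ on a graph $G$ of genus $g\ge 2$. Writing $n=|\Gamma|$, this produces an identity of the shape $2g-2 = n\cdot X$, where $X$ is $2g(G/\Gamma)-2$ plus a sum of nonnegative contributions coming from horizontal ramification at the vertices of $G/\Gamma$ and from the vertical ramification along its edges that is specific to the graph setting. Bounding $n$ from above thus reduces to producing a positive lower bound for $X$; more precisely, the full content of the theorem will follow once one has determined the gap structure of the set of admissible positive values of $X$.

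Following the template of Accola's analysis in \cite{A}, I would split into cases on $g(G/\Gamma)$. If $g(G/\Gamma)\ge 2$, the base contribution alone gives $X\ge 2$, whence $n\le g-1$; if $g(G/\Gamma)=1$, the base contribution vanishes and one needs at least one positive ramification term, giving a modest but already effective lower bound on $X$; and if $g(G/\Gamma)=0$, the $-2$ from the base must be compensated by horizontal and vertical ramification. The central technical claim is that the smallest positive value $X$ can assume across all admissible signatures is exactly $1/3$, and that the only admissible value of $X$ lying in the open interval $(1/3, 1)$ is $1/2$. This gap produces the upper bound $n\le 6(g-1)$ and simultaneously shows that $M(g)$ takes no value strictly between $4(g-1)$ and $6(g-1)$, so that once the universal lower bound $M(g)\ge 4(g-1)$ is in hand we must have $M(g)\in\{4(g-1),6(g-1)\}$.

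For the lower bound and its sharpness, I would exhibit explicit harmonic group actions realizing each of the two values. A family available for every $g\ge 2$---natural candidates are banana-type graphs with large edge multiplicities carrying a dihedral or cyclic harmonic action---should realize the signature with $X=1/2$ and hence the value $4(g-1)$. A second family, valid only for $g$ in a suitable arithmetic progression, should realize the minimal signature with $X=1/3$ and hence attain $6(g-1)$; the arithmetic restriction arises naturally from divisibility constraints on the edge multiplicities and vertical ramification indices of the minimal signature, and it is exactly this congruence condition that explains why the $6(g-1)$ value is attained for infinitely many but not all $g$.

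The main obstacle I expect is the signature classification, and specifically the proof that no admissible signature has $1/3<X<1/2$. In the Riemann-surface analogue this is essentially a one-line inequality, since the only free parameters are a handful of integer ramification indices; in the graph setting, however, the vertical ramification weights and the multiplicities of edges in $G/\Gamma$ introduce additional integer variables that must all be controlled simultaneously. I anticipate the hardest subcase to be $g(G/\Gamma)=0$ with only a few horizontally ramified vertices but nontrivial vertical ramification, where several parameters can trade off against one another, and where ruling out near-misses to the $1/3$ bound will require a careful divisibility argument tailored to the structure of the graph-theoretic Riemann-Hurwitz identity.
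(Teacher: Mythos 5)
Your outline of the upper bound and of the ``only two values'' claim matches the paper's route: Riemann--Hurwitz gives $2g-2=|\Gamma|\,X$ with $X=2g(G/\Gamma)-2+R$, the cases $g(G/\Gamma)\ge 1$ force $|\Gamma|\le 2(g-1)$, and for $g(G/\Gamma)=0$ a short signature check (Proposition~\ref{R>2}) shows the minimal positive value of $X$ is $\tfrac13$ and that no signature gives $X\in(\tfrac13,\tfrac12)$; this is exactly Proposition~\ref{2values}. Two corrections there: your stronger claim that $\tfrac12$ is the only admissible value of $X$ in $(\tfrac13,1)$ is false (e.g.\ two horizontal branch points with $r=(5,2)$ give $X=\tfrac35$, and $r=(6,2)$ gives $X=\tfrac23$), but only the gap $(\tfrac13,\tfrac12)$ is needed; and this classification is a few lines, not the main obstacle. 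Also, your candidate family for the lower bound does not work: on a banana graph of genus $g$ (two vertices, $g+1$ edges) any harmonic action has vertex stabilizers acting freely on the $g+1$ incident edges, so $|\Gamma|\le 2(g+1)<4(g-1)$ for large $g$. The paper instead uses a cycle of $g-1$ squares carrying $\mathbb{Z}/2\mathbb{Z}\times D_{g-1}$, realizing the signature $(4,2;0,0)$ with $X=\tfrac12$.

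The genuine gap is in the sharpness of the lower bound, which is the bulk of the paper. You propose to explain why $6(g-1)$ is attained only for $g$ in an arithmetic progression by noting that the minimal-signature \emph{construction} is subject to divisibility constraints; but the failure of one construction for a given $g$ does not show $M(g)<6(g-1)$. What is required is a \emph{non-existence} theorem: for infinitely many $g$, no group of order $6(g-1)$ acts harmonically on any graph of genus $g$. The paper achieves this by Accola's method (Sections~\ref{Primes} and \ref{LowerSharp}): one bounds the possible prime divisors of $|\Gamma|$ (Lemma~\ref{factorial}), shows that for $g-1=p(g_1-1)$ with $p$ large the $p$-Sylow subgroup is normal and its quotient map is horizontally unramified (Lemma~\ref{unram}), analyzes the induced action of $\Gamma/C$ on $G/C$ including the low-genus quotient cases (Lemma~\ref{lowgenus}), and concludes that if $p\mid|\Gamma|$ then $|\Gamma|\le 4(g-1)$ (Proposition~\ref{psmall}); combined with Proposition~\ref{2values} this forces $M(g)=4(g-1)$ for $g=p+1$ with $p$ a prime of the form $12k-1$, of which there are infinitely many by Dirichlet. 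None of this machinery appears in your plan. Separately, the sharpness of the upper bound also needs an actual infinite family; the paper builds one by lifting the genus~$2$ Hurwitz example to its $(\mathbb{Z}/m\mathbb{Z})^2$-covers via a Macbeath-style pushout, yielding genus $m^2+1$ graphs with harmonic groups of order $6m^2$, whereas your proposal offers no construction.
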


\noindent
Some remarks are in order concerning this result. First, 
Theorem~\ref{Main} does not hold for $g\le 1$. Indeed, in sections~\ref{g=0} and \ref{g=1} we show that there exist arbitrarily large harmonic group actions on graphs of genus 0 and 1. Second, it seems difficult to determine the function $M(g)$ precisely, although perhaps this problem is more tractable than in the case of Riemann surfaces where the analogous function $N(g)$ is still unknown. More generally, it would be interesting to classify the groups and graphs that achieve the upper bound $6(g-1)$. The corresponding question for Riemann surfaces leads to the study of Hurwitz groups and surfaces; for a survey see \cite{Con1} and the update \cite{Con2}.

We begin in section~\ref{HG} by motivating the definition of a harmonic group action, and then characterize these actions as those that are non-degenerate and for which stabilizers of directed edges are trivial (Proposition~\ref{criterion}). In section~\ref{Hbranch} we use the Riemann-Hurwitz formula to investigate harmonic group actions with small ramification, and then in section~\ref{HLowgenus} we classify harmonic actions on graphs of genus 0 and 1. Sections \ref{Hurwitz} and \ref{UpperSharp} establish the sharp upper bound of Theorem~\ref{Main}, while section~\ref{Lower} provides the lower bound. The remaining two sections employ the method of Accola in \cite{A} to prove the sharpness of the lower bound.

\section{Harmonic Group Actions}\label{HG}

A \emph{morphism} of graphs $\phi:G\rightarrow G'$ is a function $\phi:V(G)\cup E(G)\rightarrow V(G')\cup E(G')$ that maps vertices to vertices and such that for all $e\in E(G)$ with endpoints $x,y\in V(G)$, either $\phi(e)$ is an edge of $G'$ with endpoints $\phi(x),\phi(y)$, or $\phi(e)=\phi(x)=\phi(y)$, in which case we say that the edge $e$ is $\phi$-\emph{vertical}. If $x$ is a vertex of $G$, then we define $x(1)$ to be the subgraph of $G$ induced by the edges incident to $x$ (this should be thought of as the smallest neighborhood of $x$ in $G$). Explicitly, the vertices of $x(1)$ are $x$ and the vertices of $G$ adjacent to $x$, while the edges of $x(1)$ are the edges of $G$ incident to $x$. The morphism $\phi$ is \emph{non-degenerate at $x\in V(G)$} provided that $\phi(x(1))\ne \{\phi(x)\}$, and $\phi$ is \emph{non-degenerate} if it is non-degenerate at every vertex of $V(G)$. Thus, $\phi$ is non-degenerate if it does not contract any neighborhood in $G$ to a vertex of $G'$.

In \cite{BN2}, Baker and Norine (building off of Urakawa in \cite{U}) propose the following definition of \emph{harmonic morphism} as the correct graph analogue of a holomorphic map between Riemann surfaces:

\begin{definition}
A morphism $\phi:G\rightarrow G'$ is \emph{harmonic} if for all vertices $x\in V(G)$, the quantity 
$|\phi^{-1}(e')\cap x(1)|$ is independent of the choice of edge $e'\in E(\phi(x)(1))$.
\end{definition}

\noindent
That is, given a point $y\in V(G')$ and a point $x$ in the fiber of $\phi$ over $y$, all edges incident to $y$ have the same number of pre-images incident to $x$, and we denote this common number by $m_\phi(x)$, the \emph{horizontal multiplicity} of $\phi$ at $x$. For each vertex $x\in G$, the \emph{vertical multiplicity} $v_\phi(x)$ is defined to be the number of $\phi$-vertical edges incident to $x$. Finally, the \emph{degree} of $\phi$ is the number of pre-images of any edge in $G'$, and this is shown to be well defined in Lemma 2.4 of \cite{BN2}. 

It is immediate that the composition of harmonic morphisms is harmonic. 
In addition, we have the following result.

\begin{proposition}\label{2outof3} Suppose that $\phi:G_1\rightarrow G_2$ is a nonconstant harmonic morphism, and $\psi:G_2\rightarrow G_3$ is a morphism such that $\rho:=\psi\circ\phi$ is harmonic. Then $\psi$ is harmonic. 
\end{proposition}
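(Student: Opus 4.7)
My plan is to verify the harmonicity of $\psi$ pointwise, passing the information through the composition $\rho=\psi\circ\phi$. Fix $y\in V(G_2)$; I want to show that $|\psi^{-1}(e'')\cap y(1)|$ is the same for every $e''\in E(\psi(y)(1))$. If $y$ is isolated in $G_2$ or $\psi(y)$ is isolated in $G_3$ the statement is vacuous, so I assume that $y$ has at least one incident edge.

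The key step is the identity, valid for any $x\in\phi^{-1}(y)$ and any $e''\in E(\psi(y)(1))$:
$$
|\rho^{-1}(e'')\cap x(1)| \;=\; m_\phi(x)\cdot|\psi^{-1}(e'')\cap y(1)|.
$$
To see this, observe that any $e\in x(1)$ with $\rho(e)=e''$ cannot be $\phi$-vertical (otherwise $\rho(e)$ would be the vertex $\psi(y)$, not an edge), so $\phi(e)$ is an edge with $\psi(\phi(e))=e''$ and endpoint $\phi(x)=y$; in other words, $\phi(e)\in \psi^{-1}(e'')\cap y(1)$. Partitioning the left-hand set according to the image $e':=\phi(e)$ and using the horizontal multiplicity property of $\phi$ at $x$ — which contributes exactly $m_\phi(x)$ preimages in $x(1)$ for each such $e'$ — gives the identity.

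To finish, I need to produce some $x\in\phi^{-1}(y)$ with $m_\phi(x)>0$. Since $\phi$ is nonconstant, $\deg\phi>0$, and for any edge $e'$ incident to $y$ the standard fiber sum $\deg\phi=\sum_{x\in\phi^{-1}(y)}m_\phi(x)$ (which uses that $G_2$ is loopless, so the two endpoints of $e'$ are distinct and every edge in $\phi^{-1}(e')$ has exactly one endpoint in $\phi^{-1}(y)$) forces some $x$ to satisfy $m_\phi(x)>0$. For that $x$, the harmonicity of $\rho$ at $x$ makes the left-hand side of the displayed identity equal to the constant $m_\rho(x)$, so dividing by $m_\phi(x)$ yields $|\psi^{-1}(e'')\cap y(1)|=m_\rho(x)/m_\phi(x)$, independent of $e''$. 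The main obstacle is precisely locating this non-degenerate $x$ in the fiber; this is exactly where the nonconstancy hypothesis on $\phi$ (rather than just harmonicity) is essential, since a constant $\phi$ has all horizontal multiplicities equal to zero and the argument collapses.
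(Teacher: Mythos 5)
Your proof is correct and follows essentially the same route as the paper: both rest on the partition $\rho^{-1}(e'')\cap x(1)=\bigcup_{e'\in\psi^{-1}(e'')\cap y(1)}\phi^{-1}(e')\cap x(1)$ and the resulting identity $|\rho^{-1}(e'')\cap x(1)|=m_\phi(x)\,|\psi^{-1}(e'')\cap y(1)|$. The only real difference is that you locate an $x\in\phi^{-1}(y)$ with $m_\phi(x)>0$ via the fiber-sum formula for $\deg\phi$, whereas the paper invokes surjectivity of nonconstant harmonic morphisms and divides by $\#(\phi^{-1}(f)\cap x(1))$ for an arbitrary preimage $x$; your version is slightly more careful about that division step.
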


\begin{proof}
Given a point $y\in V(G_2)$, let $z=\psi(y)$, and consider any edge $e\in z(1)$. Now pick any $x\in V(G_1)$ such that $\phi(x)=y$ (such an $x$ exists, since by Lemma 2.7 of \cite{BN2}, nonconstant harmonic morphisms are surjective). The cardinality of $\rho^{-1}(e)\cap x(1)$ is independent of our choice of $e$ since $\rho$ is harmonic. Consider the set $\psi^{-1}(e)\cap y(1)$. Then $\phi$ harmonic implies that the cardinality of $\phi^{-1}(f)\cap x(1)$ is independent of the choice of edge $f\in\psi^{-1}(e)\cap y(1)$, and in fact independent of $f\in y(1)$. But
$$
\rho^{-1}(e)\cap x(1)=\bigcup_{f\in\psi^{-1}(e)\cap y(1)}\phi^{-1}(f)\cap x(1),
$$
so it follows that $\#(\psi^{-1}(e)\cap y(1))=\#(\rho^{-1}(e)\cap x(1))/\#(\phi^{-1}(f)\cap x(1))$, independent of $e\in z(1)$. This shows that $\psi$ is harmonic.
\end{proof}

Now suppose that $\Gamma$ is a group of automorphisms of a graph $G$ (note that $\Gamma$ is necessarily finite since $G$ is a finite graph). Following \cite{BN2} we define a quotient graph $G/\Gamma$ together with a graph morphism $\phi_\Gamma:G\rightarrow G/\Gamma$ as follows. The vertices of $G/\Gamma$ are the $\Gamma$-orbits of the vertices of $G$, that is: $V(G/\Gamma):=V(G)/\Gamma$.
The edges of $G/\Gamma$ are the $\Gamma$-orbits of edges of $G$ with endpoints in distinct $\Gamma$-orbits:
$$
E(G/\Gamma)=E(G)/\Gamma - \{\Gamma e \ | \ e \ \textrm{has endpoints $x,y$ and $\Gamma x= \Gamma y$}\}.
$$
Furthermore, $\phi_\Gamma(x)=\Gamma x$ for all $x\in V(G)$.  If $e\in E(G)$ with vertices $x$ and $y$, then $\phi_\Gamma(e)=\Gamma e$ if $\Gamma x\ne\Gamma y$, and $\phi_\Gamma(e)=\phi_\Gamma(x)$ if $\Gamma x=\Gamma y$. That is, edges of $G$ with vertices in the same $\Gamma$-orbit are $\phi_\Gamma$-vertical.

As the following simple example shows, the quotient morphism $\phi_\Gamma$ need not be harmonic. 

\begin{example}\label{barbell}
Consider the $\mathbb{Z}/2\mathbb{Z}$-action on the barbell graph $B$ shown in Figure~\ref{NonHarm}, where the nontrivial element acts by horizontal reflection. The quotient graph is $P_3$, the path of length three, but the quotient morphism $B\rightarrow P_3$ is not harmonic: the center edge of $P_3$ has only one pre-image, while the outer edges each have two.

\begin{figure}[h]
\centering
\begin{tikzpicture}
\tikzstyle{every node}=[circle, draw, fill=black!50, inner sep=0pt, minimum width=3pt]

\node (1) at  (-2,0)  {};
\node (2) at  (-1,0) {} ;        
\node (3) at  (1,0) {};
\node (4) at  (2,0)  {};

\draw[-] (1) to [out=90, in=90] (2);
\draw[-] (1) to [out=270, in=270] (2);
\draw [-] (2) to (3);
\draw [-] (3) to [out=90, in=90] (4);
\draw[-] (3) to [out=270, in=270] (4);

\draw[->] (0,-.25) -- (0,-1.5);

\node (1) at  (-2,-2)  {};
\node (2) at  (-1,-2) {} ;        
\node (3) at  (1,-2) {};
\node (4) at  (2,-2)  {};

\draw[-] (1) to  (2);
\draw [-] (2) to (3);
\draw[-] (3) to  (4);

\end{tikzpicture}
\caption{A non-harmonic quotient morphism with group $\mathbb{Z}/2\mathbb{Z}$.}
\label{NonHarm}
\end{figure}
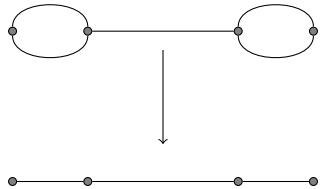

\end{example}

This is contrary to the classical case: if $\Sigma$ is a finite group of holomorphic automorphisms of a Riemann surface $X$, then the quotient space $X/\Sigma$ has a complex structure such that the quotient map $X\rightarrow X/\Sigma$ is holomorphic (\cite{Mir} Theorem 3.4). To remedy this flaw in the graph-theoretic context, we introduce the following definition.

\begin{definition}\label{Hgrpaction}
Suppose that $\Gamma<\Aut(G)$ is a group of automorphisms of a graph $G$. Then $\Gamma$ \emph{acts harmonically} on $G$ if for all subgroups $\Delta<\Gamma$, the quotient morphism $\phi_\Delta:G\rightarrow G/\Delta$ is harmonic and non-degenerate. 
\end{definition}

\noindent
Proposition \ref{2outof3} has the following consequence: if $\Gamma$ is a group acting harmonically on $G$, and $\Delta_1<\Delta_2<\Gamma$ are subgroups, then the induced map $G/\Delta_1\rightarrow G/\Delta_2$ is harmonic and non-degenerate.

The requirement that all subgroups induce harmonic quotient morphisms is a very stringent one. As a simple illustration, consider the $\mathbb{Z}/2\mathbb{Z}\times\mathbb{Z}/2\mathbb{Z}$-action on the barbell graph $B$ in Figure~\ref{NonHarm}, where the first copy of $\mathbb{Z}/2\mathbb{Z}$ acts by reflection across the vertical bisector of the middle edge, while the second copy acts via a counterclockwise rotation through $\pi$ radians. Both of these involutions act harmonically, with quotients of genus 1. Moreover, the full Klein-four action induces a harmonic quotient morphism to $P_1$, the path of length one. Nevertheless, this action is not harmonic, because the composition of the two involutions described above is the horizontal reflection described in Example~\ref{barbell}.

The next proposition provides a criterion for a group action to be harmonic.

\begin{proposition}\label{criterion}
Suppose $\Gamma<\Aut(G)$ is a group of automorphisms of a graph $G$. Then $\Gamma$ acts harmonically if and only if for every vertex $x\in V(G)$, the stabilizer subgroup $\Gamma_x$ acts freely on $E(x(1))$ and $V(x(1))\not\subset \Gamma x$. 
\end{proposition}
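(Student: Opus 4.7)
The plan is to prove the biconditional by treating the two implications separately, with the forward direction carrying essentially all the content.

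For $(\Rightarrow)$, assume $\Gamma$ acts harmonically. The condition $V(x(1))\not\subset\Gamma x$ drops out immediately from non-degeneracy of $\phi_\Gamma$ at $x$: if every neighbor of $x$ lay in the $\Gamma$-orbit of $x$, every edge of $x(1)$ would be $\phi_\Gamma$-vertical and all of $x(1)$ would collapse onto the single vertex $\Gamma x$, contradicting non-degeneracy. The freeness claim is the substantive point, and I would argue by contradiction. Suppose some nontrivial $\sigma\in\Gamma_x$ fixes an edge $e$ at $x$; as $G$ has no loops, $\sigma$ must also fix the other endpoint $y$ of $e$. Now consider $\Delta=\langle\sigma\rangle$: since $\Delta x=\{x\}$ and $G$ is loopless, no edge of $x(1)$ can be $\phi_\Delta$-vertical, so the preimages at $x$ of each edge of $\phi_\Delta(x)(1)$ form exactly one $\Delta_x=\Delta$-orbit on $E(x(1))$. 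Harmonicity of $\phi_\Delta$ forces all such orbits to have the same size, and the orbit of $e$ has size one, so $\sigma$ fixes every edge at $x$, and hence every neighbor of $x$. The very same argument then applies at each neighbor $w$ (there $\sigma\in\Gamma_w$ still fixes the edge $\overline{xw}$), so the set $F=\{v\in V(G):\sigma\text{ fixes every edge of }v(1)\}$ is nonempty and closed under adjacency, hence equal to $V(G)$ by connectedness, forcing $\sigma=1$.

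For $(\Leftarrow)$, fix any subgroup $\Delta<\Gamma$ and any vertex $x$. Non-degeneracy of $\phi_\Delta$ at $x$ is immediate: the hypothesis provides a neighbor $y$ of $x$ outside $\Gamma x\supset\Delta x$, so $\phi_\Delta(y)\ne\phi_\Delta(x)$. For harmonicity at $x$, given $e'\in E(\phi_\Delta(x)(1))$, write $e'=\Delta e$ where $e$ is a non-vertical edge at $x$ with other endpoint $y\notin\Delta x$. Then for $\tau\in\Delta$ the edge $\tau e$ is incident to $x$ only if $\tau^{-1}x\in\{x,y\}$; the possibility $\tau^{-1}x=y$ would put $y\in\Delta x$, contradicting non-verticality. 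So the preimages of $e'$ in $x(1)$ are precisely the $\Delta_x$-orbit of $e$. By hypothesis $\Delta_x\subset\Gamma_x$ acts freely on $E(x(1))$, so this orbit has size $|\Delta_x|$, independent of the choice of $e'$.

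The main obstacle is the propagation step in the forward direction. Harmonicity of $\phi_{\langle\sigma\rangle}$ is a purely local condition at each vertex, and the passage from a single fixed edge at $x$ to $\sigma$ acting trivially on all of $G$ rests on combining the loopless hypothesis (which rules out vertical edges in the stars being examined, so that harmonicity genuinely controls the $\Delta$-orbit sizes) with the connectedness bootstrap. The remaining verifications are essentially translations between the local data of (a) and (b) and the defining conditions for $\phi_\Delta$ to be harmonic and non-degenerate.
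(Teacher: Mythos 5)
Your proof is correct, and the backward direction is essentially the paper's argument (both reduce $\phi_\Delta^{-1}(e')\cap x(1)$ to the $\Delta_x$-orbit of a non-vertical edge $e$, using looplessness to rule out $\tau^{-1}x=y$). The forward direction, however, takes a genuinely different route. The paper argues globally: since $\gamma$ fixes the edge $e$, the harmonic morphism $\phi_{\langle\gamma\rangle}$ has degree $\#(\langle\gamma\rangle e)=1$ (invoking the well-definedness of degree from Lemma 2.4 of \cite{BN2}), so \emph{every} edge of $G$ is either fixed by $\gamma$ or vertical; non-degeneracy then rules out the vertical alternative at each vertex, forcing $\gamma=1$. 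You instead argue purely locally: looplessness guarantees there are no vertical edges in $x(1)$ when $\Delta=\langle\sigma\rangle$ fixes $x$, so harmonicity at $x$ alone forces all $\Delta$-orbits in $E(x(1))$ to be singletons, and connectedness propagates this to all of $G$. Your version buys independence from the degree lemma and, notably, does not use non-degeneracy at all in this half (only looplessness and connectedness), which slightly sharpens the statement: triviality of directed-edge stabilizers already follows from all subgroup quotients being harmonic. The paper's version is shorter because it outsources the global step to the degree lemma. One small bookkeeping point: your set $F$ should record that $\sigma$ fixes the vertex $v$ as well as every edge of $v(1)$ (for a vertex with a single neighbor, fixing all incident edges does not by itself pin down $v$); your inductive argument does establish both facts at each stage, so this is a matter of stating the invariant precisely rather than a gap.
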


\begin{proof}
It is clear that all quotient maps by subgroups of $\Gamma$ are non-degenerate if and only if $V(x(1))\not\subset \Gamma x$ for all $x\in V(G)$, so assume that this condition is satisfied.

First suppose that the stabilizer subgroups $\Gamma_x$ act freely on the edgesets $E(x(1))$ for all $x\in V(G)$. Let $\Delta<\Gamma$ be any subgroup, and consider the quotient map $\phi_\Delta:G\rightarrow G/\Delta$. Furthermore, let $e'\in y(1)$ where $y$ is a vertex of $G/\Delta$, and choose any $x\in G$ such that $\phi_\Delta(x)=y$. Then choose an edge $e\in x(1)$ such that $\phi_\Delta(e)=e'$. Now $\phi_\Delta^{-1}(e')\cap x(1)=\Delta e\cap x(1)=\Delta_x e=(\Gamma_x \cap \Delta)e$. But since $\Gamma_x$ acts freely on $E(x(1))$, we see that the cardinality of $\phi_\Delta^{-1}(e')\cap x(1)$ is equal to the order of $\Gamma_x\cap \Delta=\Delta_x$, independently of $e$. Thus $\Gamma$ acts harmonically on $G$.

Now suppose that $\Gamma$ acts harmonically on $G$, and consider a stabilizer $\Gamma_x$. Assume that $\gamma\in \Gamma_x$ fixes the edge $e\in x(1)$. Then consider the non-degenerate harmonic quotient map $\phi_{\gamma}:G\rightarrow G/\left<\gamma\right>$. Setting $\phi_\gamma(e)=e'$, we see that $\deg(\phi_\gamma)=\#(\phi_\gamma^{-1}(e'))=\#(\left<\gamma\right>e)=1$, so that $\phi_\gamma$ is a degree one harmonic morphism. It follows that every edge of $G$ is either fixed by $\gamma$, or else the ends of the edge are contained in the same $\left<\gamma\right>$-orbit. But then every vertex $w\in G$ is either fixed by $\gamma$, or else $V(w(1))\subset \left<\gamma\right>w$, which contradicts non-degeneracy. Thus $\gamma$ fixes all vertices of $G$, and hence is the identity automorphism. This shows that the stabilizer $\Gamma_x$ acts freely on $E(x(1))$.
\end{proof}

Proposition~\ref{criterion} says that  $\Gamma$ acts harmonically on $G$ if and only if the stabilizers of directed edges are trivial, and $V(x(1))\not\subset \Gamma x$ for all $x\in V(G)$. Thus, if $\Gamma$ acts harmonically on $G$, then the quotient map $G\rightarrow G/\Gamma$ has degree $|\Gamma|$ as expected. Moreover, at every $x\in G$, the horizontal multiplicity of $\phi_\Gamma$ is given by $m(x)=|\Gamma_x|$, and $\Gamma_x$ permutes the $\phi_\Gamma$-vertical edges in $x(1)$. Since the action is free, we see that $|\Gamma_x|$ divides the vertical multiplicity $v(x)$, and we define $w(x)$ so that $w(x)|\Gamma_x|=v(x)$. The Riemann-Hurwitz formula (\cite{BN2} Theorem 2.14) now says that

\begin{eqnarray*}
2g(G)-2&=&|\Gamma|(2g(G/\Gamma)-2)+\sum_{x\in V(G)}2(m(x)-1)+v(x)\\
&=&|\Gamma|(2g(G/\Gamma)-2)+\sum_{x\in V(G)}2(|\Gamma_x|-1)+v(x)\\
&=&|\Gamma|(2g(G/\Gamma)-2)+\sum_{y=\phi_\Gamma(x)\in V(G/\Gamma)}\frac{|\Gamma|}{|\Gamma_x|}[2(|\Gamma_x|-1)+v(x)]\\
&=&|\Gamma|(2g(G/\Gamma)-2+\sum_{y=\phi_\Gamma(x)\in V(G/\Gamma)}[2(1-\frac{1}{|\Gamma_x|})+w(x)])\\
&=&|\Gamma|(2g(G/\Gamma)-2+R)
\end{eqnarray*}
where $R:=\sum_{y\in V(G/\Gamma)}[2(1-\frac{1}{r_y})+w_y]$, with $r_y:=|\Gamma_x|$ and $w_y:=w(x)$ for any $x\in\phi_\Gamma^{-1}(y)$.

We end this section with two examples to illustrate the meaning of the quantities $r$ and $w$ in the Riemann-Hurwitz formula above.

\begin{example}
Figure~\ref{r1w1} shows a harmonic Klein-four action on a graph of genus 3, yielding a quotient of genus 1. One involution acts by counterclockwise rotation through $\pi$ radians, while another involution exchanges the top and bottom squares, flipping the vertical edges. The branch locus consists of a single point $y$, with $r_y=w_y=1$.

\begin{figure}[h]
\centering
\begin{tikzpicture}
\tikzstyle{every node}=[circle, draw, fill=black!50, inner sep=0pt, minimum width=3pt]

\node (1) at  (-1,1)  {};
\node (2) at  (0,1.5) {} ;        
\node (3) at  (1,1) {};
\node (4) at  (0,.5)  {};

\node (5) at  (-1,.5)  {};
\node (6) at  (0,1) {} ;        
\node (7) at  (1,.5) {};
\node (8) at  (0,0)  {};

\draw[-] (1) to (2);
\draw[-] (2) to (3);
\draw [-] (3) to (4);
\draw [-] (4) to (1);

\draw[-] (5) to (6);
\draw[-] (6) to (7);
\draw[-] (7) to (8);
\draw[-] (8) to (5);

\draw[-] (1) to (5);
\draw[-] (3) to (7);

\draw[->] (0,-.25) -- (0,-1.5);

\node (1) at  (0,-2) {};
\node (2) at  (1,-2) {};  
\coordinate [label=right : $y$] (y) at (1.1,-2);
  
\draw[-] (1) to [out=60, in=120] (2);
\draw[-] (2) to [out=240, in=300] (1);

\end{tikzpicture}
\caption{A harmonic Klein-four cover with only vertical ramification.}
\label{r1w1}
\end{figure}
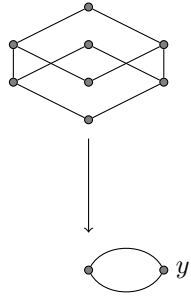

\end{example}

\begin{example}
Figure~\ref{r2w1} shows a harmonic Klein-four action on a graph of genus 5, yielding a quotient of genus 1. One involution acts by reflection across the horizontal bisector of the vertical edges, while another involution fixes the degree 6 vertices on the right, exchanging the horizontal petals and switching the two vertical edges. The branch locus consists of a single point $y$, with $r_y=2$ and $w_y=1$.

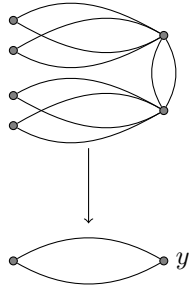
\begin{figure}[h]
\centering
\begin{tikzpicture}
\tikzstyle{every node}=[circle, draw, fill=black!50, inner sep=0pt, minimum width=3pt]

\node (1a) at  (-1,1.2)  {};
\node (1b) at  (-1,.8)  {};
\node (2) at  (1,1) {} ;        
\node (3a) at  (-1,.2) {};
\node (3b) at  (-1,-.2) {};
\node (4) at  (1,0)  {};

\draw[-] (1a) to [out=30, in=150] (2);
\draw[-] (1a) to [out=330, in=210] (2);
\draw[-] (1b) to [out=30, in=150] (2);
\draw[-] (1b) to [out=330, in=210] (2);
\draw[-] (3a) to [out=30, in=150] (4);
\draw[-] (3a) to [out=330, in=210] (4);
\draw[-] (3b) to [out=30, in=150] (4);
\draw[-] (3b) to [out=330, in=210] (4);
\draw[-] (2) to [out=240, in=120] (4);
\draw[-] (2) to [out=300, in=60] (4);

\draw[->] (0,-.5) -- (0,-1.5);

\node (1) at  (-1,-2) {};
\node (2) at  (1,-2) {};  
\coordinate [label=right : $y$] (y) at (1.1,-2);
  
\draw[-] (1) to [out=30, in=150] (2);
\draw[-] (1) to [out=330, in=210] (2);

\end{tikzpicture}
\caption{A harmonic Klein-four cover with horizontal and vertical ramification.}
\label{r2w1}
\end{figure}

\end{example}

\section{Branch Loci of Harmonic Quotient Maps}\label{Hbranch}

Let $\Gamma$ be a group acting harmonically on a finite graph $G$, so that we have a non-degenerate harmonic quotient map $\phi_\Gamma:G\rightarrow G/\Gamma=G'$. In this section we describe how the ramification number $R=\sum_{y\in G'}2(1-\frac{1}{r_y})+w_y$ from the Riemann-Hurwitz formula determines the possible branch loci for $\phi_\Gamma$ in several cases.

\begin{proposition}\label{R<2} 
If $0<R<2$, then there is a single branch point $y\in G'$, and either $r_y\ge 2$ and $w_y=0$, or $r_y=w_y=1$.
\end{proposition}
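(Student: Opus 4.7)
The plan is to observe that each term in the sum $R=\sum_{y\in G'}[2(1-1/r_y)+w_y]$ is non-negative, and that any term corresponding to a branch point (i.e., one where $r_y\ge 2$ or $w_y\ge 1$) contributes at least $1$. This immediately forces the branch locus to consist of a single point when $0<R<2$, and a short case analysis then pins down the two possibilities for $(r_y,w_y)$.

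More concretely, I would first verify the lower bound $2(1-1/r_y)+w_y\ge 1$ at any branch point $y$. If $r_y\ge 2$, then $2(1-1/r_y)\ge 1$ and $w_y\ge 0$, so the contribution is at least $1$. If instead $r_y=1$, then $y$ can only be a branch point by virtue of having $w_y\ge 1$, in which case the contribution is exactly $w_y\ge 1$. Since $R<2$, at most one branch point can exist, and the hypothesis $R>0$ guarantees at least one; hence there is exactly one branch point $y$.

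Next I would determine the constraints on $(r_y,w_y)$ at this unique branch point. The combined contribution $2(1-1/r_y)+w_y$ must lie in $(0,2)$. If $r_y\ge 2$ and $w_y\ge 1$ simultaneously, the contribution is at least $1+1=2$, contradicting $R<2$; so one of these quantities must take its minimum value. If $w_y=0$ then $r_y\ge 2$ (otherwise $y$ is not a branch point), giving the first case. If instead $r_y=1$, then $w_y\ge 1$; but if $w_y\ge 2$, the contribution is $\ge 2$, again contradicting $R<2$, forcing $w_y=1$. This yields the second case $r_y=w_y=1$.

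There is no real obstacle here; the argument is essentially a bookkeeping exercise based on the non-negativity of each summand in the Riemann--Hurwitz expression for $R$. The one point worth being careful about is recognizing that the quantities $r_y$ and $w_y$ are integers with $r_y\ge 1$ and $w_y\ge 0$ (since $w_y=v(x)/|\Gamma_x|$ for $x\in\phi_\Gamma^{-1}(y)$, as defined in the preceding section), which is what rules out intermediate cases such as $r_y=1, w_y\ge 2$ or $r_y\ge 2, w_y\ge 1$.
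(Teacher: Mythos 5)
Your proof is correct and follows essentially the same route as the paper: both arguments rest on the observation that each branch point contributes at least $1$ to $R$, which forces a unique branch point when $0<R<2$, followed by the same short integer case analysis on $(r_y,w_y)$. Your write-up is actually slightly more explicit than the paper's (which compresses the case check into ``a little thought''), but there is no substantive difference.
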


\begin{proof}
First note that each term $2(1-\frac{1}{r_y})+w_y\ge 1$, with equality only if $r_y=w_y=1$ or $r_y=2, w_y=0$. This shows immediately that there is only one branch point if $0<R<2$. Moreover, the cases listed are the only possible yielding $0<R<2$. The case $r_y=w_y=1$ is illustrated in Figure~\ref{r1w1}.
\end{proof}

\begin{proposition}\label{R=2}
If $R=2$, then there are several possibilities:
\begin{itemize}
\item[(i)] There is a single branch point $y\in G'$, and $r_y=2, w_y=1$ or $r_y=1, w_y=2$.
\item[(ii)] There are two branch points $y_1$ and $y_2,$ and we have the following possibilities for the ramification vector $(r_1,r_2; w_1,w_2)$ (up to a re-ordering of the branch points):
\begin{itemize}
\item[(a)] (2,2; 0,0)
\item[(b)] (1,1; 1,1)
\item[(c)] (1,2; 1,0).
\end{itemize}
\end{itemize}
\end{proposition}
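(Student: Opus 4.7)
The plan is to mirror the proof of Proposition \ref{R<2}: first show that every branch point contributes at least $1$ to $R$, so that $R=2$ forces at most two branch points, then enumerate the arithmetic possibilities in each case.

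First I would record the basic inequality. For any $y\in G'$, set $c_y := 2(1-\tfrac{1}{r_y})+w_y$. Then $c_y=0$ exactly when $r_y=1$ and $w_y=0$ (i.e.\ $y$ is not a branch point), and otherwise $c_y\geq 1$, with equality iff either $r_y=2,\, w_y=0$ or $r_y=1,\, w_y=1$ (since $c_y=1$ means $w_y=\tfrac{2}{r_y}-1$, forcing $r_y\in\{1,2\}$). Consequently $R=2$ forces at most two branch points in $G'$.

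Next I would split into the one-branch-point and two-branch-point cases. If there is a single branch point $y$ with $c_y=2$, writing $c_y=2$ as $w_y=\tfrac{2}{r_y}$ immediately restricts $r_y\in\{1,2\}$, yielding $(r_y,w_y)=(1,2)$ or $(2,1)$; this gives case (i). If there are two branch points $y_1,y_2$, then necessarily $c_{y_1}=c_{y_2}=1$, so by the observation above each pair $(r_i,w_i)$ lies in $\{(2,0),(1,1)\}$. The three unordered pairings $(2,0)\&(2,0)$, $(1,1)\&(1,1)$, and $(2,0)\&(1,1)$ then give precisely the ramification vectors (a), (b), and (c) of case (ii).

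There is no real obstacle here beyond the arithmetic bookkeeping; the whole argument is a direct continuation of the enumeration carried out in Proposition \ref{R<2}, and the only thing to be careful about is verifying that the two listed values of $(r,w)$ with $c=1$ are exhaustive (which follows from the integrality of $w_y$ and $r_y\geq 1$). No geometric input beyond the Riemann–Hurwitz identity derived at the end of Section \ref{HG} is needed.
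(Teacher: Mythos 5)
Your proposal is correct and follows essentially the same route as the paper: bound the number of branch points by noting each contributes at least $1$ to $R$, then enumerate the arithmetic solutions in the one- and two-point cases. Your organization of the two-point case (forcing $c_{y_1}=c_{y_2}=1$ and reusing the classification of contribution-one points) is a slightly cleaner way of carrying out the ``little thought'' the paper leaves to the reader, but it is not a different argument.
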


\begin{proof}
If there is a single branch point, then we must have $\frac{2}{r_y}=w_y$ or $2=r_yw_y$. The two solutions are as stated. The case $r_y=2$ and $w_y=1$ is illustrated in Figure~\ref{r2w1}.

Now suppose there are two branch points. Then we must have $2+w_1+w_2=\frac{2}{r_1}+\frac{2}{r_2}$. The three vectors listed are clearly solutions, and a little thought shows that they are the only solutions. 

More than two branch points cannot occur, since each contributes at least 1 to $R$.
\end{proof}

\begin{proposition}\label{R>2}
If $R>2$, then in fact $R\ge\frac{7}{3}$, and this lower bound occurs in exactly three ways:
\begin{itemize}
\item[(i)] a single branch point with $r=3, w=1$;
\item[(ii)] two branch points with vector $(r_1,r_2; w_1,w_2)=(3,2; 0,0)$;
\item[(iii)] two branch points with vector $(r_1,r_2; w_1,w_2)=(3,1; 0,1)$.
\end{itemize}
\end{proposition}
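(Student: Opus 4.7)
The plan is to repeat the strategy of Propositions~\ref{R<2} and~\ref{R=2}: enumerate how small the individual summand $t_y:=2(1-\tfrac{1}{r_y})+w_y$ can be as a function of the integer parameters $r_y\ge 1$ and $w_y\ge 0$, and then case-split on the number of branch points in $G'$.

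First I would catalogue the relevant values of $t_y$. The key facts to verify are: (a) any branch point contributes $t_y\ge 1$, with equality only when $(r_y,w_y)\in\{(1,1),(2,0)\}$; (b) the values with $1<t_y<2$ come only from $w_y=0$ with $r_y\ge 3$, yielding $t_y=2-\tfrac{2}{r_y}$, whose minimum is $\tfrac{4}{3}$ at $r_y=3$; and (c) if $t_y>2$, then $t_y\ge\tfrac{7}{3}$, with equality uniquely at $(r_y,w_y)=(3,1)$. To see (c), note that $w_y=0$ forces $t_y<2$; that $w_y=1$ gives $t_y=3-\tfrac{2}{r_y}>2$ only when $r_y\ge 3$, with minimum $\tfrac{7}{3}$; and that $w_y\ge 2$ gives $t_y\ge 3$.

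With this enumeration in hand, the case analysis is short. If there is a single branch point then $R=t_y>2$ forces $t_y\ge\tfrac{7}{3}$, and equality pins down $(r,w)=(3,1)$, giving case (i). If there are two branch points, the condition $R>2$ rules out the option $t_1=t_2=1$ (which yields $R=2$). The minimum of $t_1+t_2$ over the remaining possibilities is attained when one $t_i=1$ and the other is the least value strictly greater than $1$, namely $\tfrac{4}{3}$, giving $R=\tfrac{7}{3}$. Unpacking, the value $t=\tfrac{4}{3}$ forces $(r,w)=(3,0)$, while $t=1$ forces either $(2,0)$ or $(1,1)$, which are precisely the vectors (ii) and (iii). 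If neither $t_i$ equals $1$, then both are at least $\tfrac{4}{3}$ and $R\ge\tfrac{8}{3}>\tfrac{7}{3}$. Finally, any configuration with three or more branch points already satisfies $R\ge 3$. The main obstacle is simply the careful verification of (c) together with the exhaustive check that no other combination of small $t_y$-values produces a sum strictly between $2$ and $\tfrac{7}{3}$.
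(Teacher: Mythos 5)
Your proposal is correct and follows essentially the same route as the paper: bound each summand $2(1-\tfrac{1}{r_y})+w_y$ from below, note that three or more branch points force $R\ge 3$, and then enumerate the one- and two-branch-point configurations that can realize $R=\tfrac{7}{3}$. Your write-up just makes explicit the "little thought" the paper leaves to the reader (in particular the verification that no two-point configuration lands strictly between $2$ and $\tfrac{7}{3}$).
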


\begin{proof}
First note that if there are three or more branch points, then $R\ge 3$. For a single branch point we have
$2<R=2-\frac{2}{r}+w$ if and only if $2< rw$, and a little thought shows that the minimum occurs as stated. Moreover, since the $w$'s only contribute integer values, the only way to achieve $R=\frac{7}{3}$ is to get a contribution of $\frac{4}{3}$ from one of the $r$ terms. This can happen in the two ways stated. \end{proof}

\section{Harmonic Group Actions on Graphs of Low Genus}\label{HLowgenus}

\subsection{Genus zero}\label{g=0}

Suppose that $T$ is a tree (i.e. a finite graph of genus zero), and that $\Gamma$ acts harmonically on $T$. Then Riemann-Hurwitz says that $-2=|\Gamma|(-2+R)$. If $\Gamma$ is nontrivial, then we see that $R\ne 0$, and in fact $0<R<2$ with $|\Gamma|=\frac{2}{2-R}$. From Proposition~\ref{R<2} we see that $T\rightarrow T/\Gamma$ has a single branch point $y$ with either $r=w=1$ (which yields $R=1$) or $r\ge 2, w=0$ (which yields $R=2(1-\frac{1}{r})$).  In the first case we get $\Gamma=\mathbb{Z}/2\mathbb{Z}$, and the tree $T$ can be obtained by connecting two copies of a rooted tree $T_0$ with a single edge between the roots. Then $\Gamma$ acts by interchanging the two rooted trees, so that the ramification locus consists of the two roots (each with horizontal multiplicity 1), and the connecting edge is $\phi_\Gamma$-vertical.

In the second case we get $|\Gamma|=r$, which implies that the ramification locus consists of a single point $x$ (because the order of the stabilizer of $x$ is $r$). Then $T$ is obtained as follows: let $T_0$ be a rooted tree, and then let $T$ be the tree obtained by joining $r$ copies of $T_0$ at their roots, one for each element of $\Gamma$. Then let $\Gamma$ act on $T$ by permuting the copies of $T_0$ according to the left-regular representation. In particular, we see that $T$ must have a central vertex, with isomorphic branches. This analysis shows that unlike the case of Riemann surfaces (\cite{Mir} Prop. 3.1), stabilizers of harmonic group actions can be non-cyclic ($\Gamma$ was arbitrary above).

\subsection{Genus one}\label{g=1}

Now suppose that $G$ has genus 1, and $\Gamma$ acts harmonically on $G$. If $g(G/\Gamma)=1$, then Riemann-Hurwitz says that $0=|\Gamma|R$, so that $R=0$ and the quotient map is unramified. It follows that $\Gamma$ must be a subgroup of the rotations of the unique cycle of $G$. In particular, $\Gamma$ is cyclic of order a strict divisor of the number of vertices in the unique cycle of $G$.

Now we consider the case that $g(G/\Gamma)=0$. Then we find that $R=2$, and Proposition \ref{R=2} tells us that there are several possibilities for the ramification locus. The first case is that there is a single branch point $y$, and every point above $y$ has horizontal multiplicity 2 and 2 vertical edges. Hence, the stabilizers are all cyclic of order 2. Here is the only example: the obvious $D_n$-action on an $n$-cycle decorated with two copies of a tree $T$ at each vertex. The other ramification possibilities arise by restricting this $D_n$-action to various subgroups as described below.

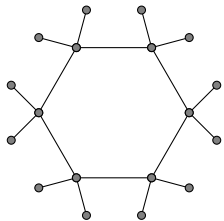
\begin{figure}[h]
\centering
\begin{tikzpicture}
\tikzstyle{every node}=[circle, draw, fill=black!50, inner sep=0pt, minimum width=3pt]

\foreach \x in {0,60,120,180,240,300}
\draw[rotate around={\x:(0,0)}] (-.5,.866) node {} to (.5,.866) node {}
(-1,1) node {} to (-.5,.866)
(1,1) node {} to (.5,.866);

\end{tikzpicture}
\caption{A decorated 6-cycle supporting a harmonic $D_6$-action.}
\label{genus1action}
\end{figure}

In the second case there is a single branch point $y$, and every point above $y$ has horizontal multiplicity 1 and 2 vertical edges (in this case all ramification is vertical). This corresponds to the action of the cyclic subgroup of degree $n$ contained in $D_n$. The remaining cases all have two ramification points, and differ according to their ramification vector $(r_1,r_2; w_1,w_2)$. The case $(2,2; 0,0)$ requires $n$ to be even, and corresponds to a reflection across the line joining two antipodal vertices in the $n$-cycle. The case $(1,2; 1,0)$ requires $n$ to be odd, and corresponds to a reflection across the line joining a vertex to the midpoint of the opposite edge. Finally, the case $(1,1; 1,1)$ occurs by reflecting an $n$-cycle for $n$-even across the line joining the midpoints of two opposite edges.

\section{The Hurwitz Genus Bound}\label{Hurwitz}

Now suppose that $g(G)\ge 2$, and $\Gamma$ acts harmonically on $G$. First assume that $g(G/\Gamma)\ge 1$. If $R=0$, then the Riemann-Hurwitz formula implies that $g(G/\Gamma)\ge 2$, so
$$
|\Gamma|=\frac{g(G)-1}{g(G/\Gamma)-1}\le g(G)-1.
$$
If $R\ne 0$, then $R\ge 1$, so
$$
|\Gamma|=\frac{2g(G)-2}{2g(G/\Gamma)-2+R}\le 2g(G)-2.
$$
Consequently, if $|\Gamma|>2(g(G)-1)$, then we must have $g(G/\Gamma)=0$. 

We now assume that $g(G/\Gamma)=0$. Then $R>2$, and so by Proposition \ref{R>2} we actually have $R\ge\frac{7}{3}$. But then
$$
|\Gamma|=\frac{2g(G)-2}{R-2}\le 6(g(G)-1).
$$
This is the analogue of the classical Hurwitz genus bound \cite{H} for the size of holomorphic group actions on compact Riemann surfaces of genus $g\ge 2$. 

The following example shows that the upper bound $6(g-1)$ is attained for $g=2$: let $T$ be the rooted tree consisting of three edges emanating from a root vertex. Then join two copies of $T$ via 3 edges between the roots to obtain a graph $G$ of genus 2 (see Figure~\ref{Hurwitzg2}). The group $\mathbb{Z}/3\mathbb{Z}\times \mathbb{Z}/2\mathbb{Z}=\left<\sigma,\tau\right>$ acts harmonically on $G$, achieving the Hurwitz genus bound. Here $\sigma$ fixes both degree-6 vertices while permuting all the edges, and $\tau$ flips the vertices, interchanging the two copies of $T$ and making the connecting edges $\phi_\tau$-vertical.

\begin{figure}[h]
\centering
\begin{tikzpicture}
\tikzstyle{every node}=[circle, draw, fill=black!50, inner sep=0pt, minimum width=3pt]

\node (L1) at  (-2,1)  {};
\node (L2) at  (-2,0) {} ;        
\node (L3) at  (-2,-1) {};
\node (CL) at  (-1,0)  {};
\node (CR) at  (1,0) {};
\node (R1) at  (2,1) {};
\node (R2) at  (2,0) {};
\node (R3) at  (2,-1) {};

\draw[-] (CL) to [out=45, in=135] (CR);
\draw[-] (CL) to [out=-45, in=-135] (CR);
\draw [-] (CL) -- (CR);
\draw [-] (CR) -- (R1);
\draw [-] (CR) -- (R2);
\draw [-] (CR) -- (R3);
\draw [-] (CL) -- (L1);
\draw [-] (CL) -- (L2);
\draw [-] (CL) -- (L3);
\end{tikzpicture}
\caption{A genus 2 graph supporting a harmonic $\mathbb{Z}/3\mathbb{Z}\times\mathbb{Z}/2\mathbb{Z}$-action, thereby attaining the Hurwitz bound.}
\label{Hurwitzg2}
\end{figure}
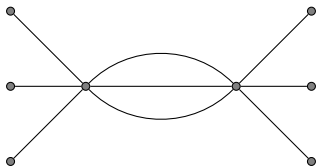

\section{Sharpness of the Upper Bound}\label{UpperSharp}

Adapting an idea from Macbeath \cite{M}, we prove the following result:

\begin{proposition}\label{UpperExist}
Let $g=m^2+1$ for $m\ge 1$. Then $M(g)=6(g-1)$. That is, there exists a graph $G(m)$ of genus $g=m^2+1$ and a group $\Gamma(m)$ of order $6(g-1)=6m^2$ acting harmonically on $G(m)$.
\end{proposition}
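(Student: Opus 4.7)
The strategy, adapted from Macbeath, is to build each $(G(m), \Gamma(m))$ as an unramified cover of the $g=2$ example from Figure~\ref{Hurwitzg2}. Denote that graph by $G(1)$ and the harmonic $\mathbb{Z}/3\mathbb{Z}\times \mathbb{Z}/2\mathbb{Z}$-action on it by $\Gamma(1)$; this handles the base case $m=1$. For $m\ge 2$, I would let $G(m)\to G(1)$ be the regular unramified cover associated to the mod-$m$ homology surjection
\[
\pi_1(G(1))\twoheadrightarrow H_1(G(1),\mathbb{Z}/m\mathbb{Z})\cong (\mathbb{Z}/m\mathbb{Z})^2,
\]
which makes sense because $g(G(1))=2$ forces $\pi_1(G(1))$ to be free of rank 2. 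The kernel $N(m)$ of this surjection is characteristic in $\pi_1(G(1))$, being generated by commutators and $m$-th powers, and the associated cover has deck group $\Delta(m):=(\mathbb{Z}/m\mathbb{Z})^2$. Since $\phi\colon G(m)\to G(1)$ is unramified of degree $m^2$, the Riemann-Hurwitz formula collapses to $g(G(m))-1=m^2\bigl(g(G(1))-1\bigr)=m^2$, so $g(G(m))=m^2+1$ as required.

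Characteristicness of $N(m)$ means that every element of $\Gamma(1)$ preserves $N(m)$ up to conjugation, so by the standard lifting criterion for regular covers each $\gamma\in\Gamma(1)$ admits a lift $\tilde\gamma\in\Aut(G(m))$. The set of all such lifts, together with the deck group $\Delta(m)$, is closed under composition and inversion, yielding a subgroup $\Gamma(m)\le\Aut(G(m))$ that fits into a short exact sequence
\[
1\to \Delta(m)\to\Gamma(m)\to\Gamma(1)\to 1,
\]
so $|\Gamma(m)|=6m^2=6(g(G(m))-1)$. Combined with the upper bound already proved in Section~\ref{Hurwitz}, this gives $M(m^2+1)=6m^2$ as soon as the action is shown to be harmonic.

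That final verification — checked via Proposition~\ref{criterion} — is where I expect the main obstacle to lie. Because the cover is unramified, $\phi$ restricts at every $\tilde x\in V(G(m))$ to a bijection $\tilde x(1)\to \phi(\tilde x)(1)$ on neighborhoods. Since $\Delta(m)$ acts freely, the stabilizer $\Gamma(m)_{\tilde x}$ projects isomorphically onto $\Gamma(1)_{\phi(\tilde x)}$, and the two actions are intertwined by this local bijection; hence freeness of $\Gamma(m)_{\tilde x}$ on $E(\tilde x(1))$ follows from freeness of $\Gamma(1)_{\phi(\tilde x)}$ on $E(\phi(\tilde x)(1))$, which holds by harmonicity of $\Gamma(1)$. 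Non-degeneracy transfers similarly: any vertex in $V(\phi(\tilde x)(1))\setminus \Gamma(1)\phi(\tilde x)$ lifts to a vertex in $V(\tilde x(1))\setminus \Gamma(m)\tilde x$. The subtlety is that merely constructing an action of a group of the right order is not enough — Definition~\ref{Hgrpaction} demands the non-degenerate harmonic property for \emph{all} subgroups — and it is precisely the local bijection furnished by the absence of ramification that lets Proposition~\ref{criterion} be applied uniformly at every vertex.
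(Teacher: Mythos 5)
Your proposal is correct and follows essentially the same route as the paper: take the mod-$m$ characteristic unramified cover $G(m)\to G(1)$ of the genus-2 Hurwitz example with deck group $(\mathbb{Z}/m\mathbb{Z})^2$, lift the $\mathbb{Z}/3\mathbb{Z}\times\mathbb{Z}/2\mathbb{Z}$-action to obtain the extension $1\to(\mathbb{Z}/m\mathbb{Z})^2\to\Gamma(m)\to\Gamma(1)\to 1$, and verify harmonicity via Proposition~\ref{criterion} using that the cover is unramified. Your verification step is in fact spelled out in slightly more generality than the paper's (which simply observes that the order-3 stabilizers over the degree-6 vertices act freely on the six incident edges); the only cosmetic difference is that the paper routes the lifting through the universal cover and Macbeath's Theorem 2 rather than lifting directly to $G(m)$.
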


\begin{proof}
Let $G=G(1)$ be the graph of genus 2 achieving the Hurwitz bound in Figure~\ref{Hurwitzg2}. Then the fundamental group $\pi_1$ of $G$ is the free group on two generators, corresponding to the two loops of $G$. As described at the end of section~\ref{Hurwitz}, $\Gamma=\mathbb{Z}/3\mathbb{Z}\times\mathbb{Z}/2\mathbb{Z}=\left<\sigma,\tau\right>$ acts harmonically on $G$, and by Theorem 2 of \cite{M} this action lifts to the universal cover $T$ of $G$, normalizing $\pi_1$ viewed as the group of deck transformations. Thus, we have a group $\widetilde{\Gamma}$ of automorphisms of $T$, described by the following group extension:
$$
1\rightarrow\pi_1\rightarrow\widetilde{\Gamma}\rightarrow \Gamma\rightarrow 1.
$$
Taking the pushout via the characteristic quotient $\pi_1\rightarrow\pi_1/[\pi_1,\pi_1]\pi_1^m\cong (\mathbb{Z}/m\mathbb{Z})^2$, we obtain an exact sequence defining a group $\Gamma(m)$ of order $6m^2$:
$$
1\rightarrow (\mathbb{Z}/m\mathbb{Z})^2\rightarrow \Gamma(m)\rightarrow \Gamma\rightarrow 1.
$$
Letting $G(m)$ denote the $(\mathbb{Z}/m\mathbb{Z})^2$ unramified covering of $G$ corresponding to the subgroup $[\pi_1,\pi_1]\pi_1^m$, we see that $\Gamma(m)$ acts on $G(m)$, and an application of Proposition~\ref{criterion} shows that this action is harmonic. Indeed, the vertices of $G(m)$ with non-trivial stabilizers comprise the fibers over the degree-6 vertices of $G$. For each such vertex $x\in G(m)$, the stabilizer subgroup $\Gamma(m)_x$ has order 3, and it acts freely on the six edges incident to $x$. Thus, no directed edge is fixed by $\Gamma(m)$, so the action is harmonic. Moreover, by Riemann-Hurwitz $g(G(m))=m^2(g(G)-1)+1=m^2+1$.
\end{proof}

\section{A Lower Bound}\label{Lower}

Let $S$ denote the square graph consisting of four vertices and four edges. 
For $g\ge 3$, let $G_{g}$ be the graph of genus $g$ obtained by joining $g-1$ copies of $S$ in a cycle, leaving a pair of opposite vertices in each square unattached. The group $\mathbb{Z}/2\mathbb{Z}\times D_{g-1}$ acts harmonically on the resulting graph $G_g$. Here $D_{g-1}$ acts in the natural way on the cycle of squares, while $\mathbb{Z}/2\mathbb{Z}$ interchanges the inner and outer edges of the squares.

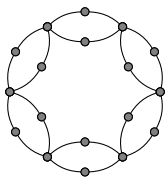
\begin{figure}[h]
\centering
\begin{tikzpicture}

\tikzstyle{every node}=[circle, draw, fill=black!50, inner sep=0pt, minimum width=3pt]

\foreach \x in {0,60,120,180,240,300}
\draw[rotate around={\x:(0,0)}] (-.5,.866) node {} to [out=45,in=180] (0,1.066) node {}
(0,1.066) node {} to [out=0,in=135] (.5,.866) node {}
(.5,.866) node {} to [out=-135,in=0] (0,.666) node {}
(0,.666) node {} to  [out=180,in=-45] (-.5,.866) node {};

\end{tikzpicture}

\caption{The graph $G_7$}
\label{LowerBoundGraph}
\end{figure}

Hence, for each $g\ge 3$, there exists a graph $G_g$ of genus $g$ supporting a harmonic group action by a group of order $4(g-1)$. If we define $M(g)$ to be the maximal size of a group acting harmonically on any graph of genus $g\ge 2$, then we have established the bounds announced in Theorem~\ref{Main}:
$$
4(g-1)\le M(g)\le 6(g-1),
$$
and by Proposition~\ref{UpperExist}, the upper bound is sharp. To complete the proof of Theorem~\ref{Main}, it remains to show that the lower bound is sharp, and that $4(g-1)$ and $6(g-1)$ are actually the only values taken by the function $M(g)$.

\section{Prime Factors of Harmonic Automorphism Group Orders}\label{Primes}

Suppose that $g=g(G)\ge 2$, and $\Gamma$ acts harmonically on $G$. We wish to determine the possible prime factors of $|\Gamma|$. As in Accola \cite{A}, the next two lemmas are simple consequences of the Riemann-Hurwitz formula. So suppose that $p \ | \ |\Gamma|$, for $p$ prime. Then by Cauchy's Theorem, $\Gamma$ has a cyclic subgroup, $C$, of order $p$. We consider the $p$-cyclic harmonic cover $G\rightarrow G/C$. Note that if $y\in G/C$ is a branch point with horizontal ramification in the fiber, then the fiber consists of a single point (since $p$ prime), so that there is no vertical ramification over $y$. Hence, the Riemann-Hurwitz formula says that
$$
2g-2=p(2g_0-2)+2s(p-1)+p\sum_{j=1}^tw_j
$$
where 
\begin{itemize}
\item $g_0:=g(G/C)$;
\item $s:=$ number of branch points with horizontal ramification in the fiber;
\item $t:=$ number of branch points with only vertical ramification in the fiber;
\item $w_j\ge 1$ is the vertical ramification index above the $j$th vertically ramified branch point. 
\end{itemize}
First suppose that $g_0\ge 2$. Then we have $2g-2\ge 2p$, or $p\le g-1$.
If $g_0=1$, then the Riemann-Hurwitz formula becomes
$$
2g-2=2s(p-1)+p\sum_{j=1}^t w_j.
$$
so that if $s\ge 1$ we have $2g-2\ge 2(p-1)$, or $p\le g$.
On the other hand if $s=0$, then we have $2g-2=p\sum_{j=1}^t w_j$, so that $p \ | \ 2(g-1)$, and in particular
$p\le \max\{2,g-1\}$.

Now suppose that $g_0=0$. Then we have
$$
2g-2=-2p+2s(p-1)+p\sum_{j=1}^tw_j.
$$
If $s\ge 2$ then we get $2g-2\ge 2p-4$, or $p\le g+1$.
If $s=1$, then we get $2g=p\sum_{j=1}^tw_j$, which implies that $p \ | \ 2g$. In particular,
$p\le g$. Finally, if $s=0$, then $2g-2=-2p+p\sum_{j=1}^tw_j$, which implies that $p \ | \ 2(g-1)$, and in particular
$p\le \max\{2,g-1\}$.
Putting all of this together, we see that $p\le g+1$ in all cases, and we have proven the following lemma:

\begin{lemma}\label{factorial}
Suppose that $\Gamma$ acts harmonically on a graph $G$ of genus $g\ge 2$. If $p$ is a prime factor of $|\Gamma|$, then $p$ divides $(g+1)!$.
\end{lemma}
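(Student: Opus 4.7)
The plan is to apply Cauchy's Theorem to extract a cyclic subgroup $C < \Gamma$ of order $p$, and then exploit the harmonic quotient $G \to G/C$ via the Riemann-Hurwitz formula from Section~\ref{HG}. Since $p \le g+1$ would force $p$ to appear among the primes $\{2, 3, \ldots, g+1\}$ and hence divide $(g+1)!$, the entire task reduces to proving the bound $p \le g+1$ in every case.

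To set up Riemann-Hurwitz efficiently, I would first observe a structural consequence of $p$ being prime: if $y \in G/C$ is a branch point and some $x \in \phi_C^{-1}(y)$ has horizontal multiplicity $> 1$, then $|C_x| > 1$, and since $|C| = p$ is prime this forces $C_x = C$, so the fiber over $y$ is the single point $x$; in particular, writing $v(x)$ for the vertical multiplicity at $x$, we have $v(x) = |C_x| \cdot w(x) = p \cdot w(x)$. Thus each branch point is \emph{either} a single fixed point (``horizontally ramified'', contributing $2(p-1)$ plus $p$ times some nonnegative integer vertical term to $R \cdot p$), \emph{or} consists of $p$ vertically ramified preimages (contributing $p w_j$ with $w_j \ge 1$). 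Introducing $s$ (the number of branch points of the first type) and $t, w_1, \ldots, w_t$ (for the second type), Riemann-Hurwitz takes the clean form
$$2g - 2 = p(2g_0 - 2) + 2s(p-1) + p\sum_{j=1}^t w_j.$$

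The remainder of the proof is a case split on $g_0 := g(G/C)$, exactly as indicated in the preceding discussion. If $g_0 \ge 2$, the first summand alone gives $2g - 2 \ge 2p$, hence $p \le g - 1$. If $g_0 = 1$, then either $s \ge 1$ yields $2g-2 \ge 2(p-1)$ so $p \le g$, or $s = 0$ in which case $2g - 2 = p \sum w_j$ gives $p \mid 2(g-1)$ and so $p \le \max\{2, g-1\} \le g+1$ (using $g \ge 2$). If $g_0 = 0$, then $s \ge 2$ gives $2g - 2 \ge 2p - 4$, hence $p \le g+1$; $s = 1$ gives $2g = p \sum w_j$, so $p \mid 2g$ and $p \le g$ (using $g \ge 2$, so that $p \ne 2g$ is impossible to violate the bound unless $p = 2$, which still satisfies $p \le g+1$); and $s = 0$ gives $p \mid 2(g-1)$ as before. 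In every branch one obtains $p \le g+1$.

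The ``hard part'' is not hard but bookkeeping: making sure that in each $g_0 = 0, 1$ subcase with no horizontal ramification (where $p$ is constrained only by a divisibility condition), the divisibility $p \mid 2(g-1)$ or $p \mid 2g$ still implies $p \le g+1$, and in particular that edge cases like $p = 2$ do not slip through. Once the bound $p \le g+1$ is established uniformly, the conclusion is immediate: every prime $p \le g+1$ appears as a factor of $(g+1)! = 2 \cdot 3 \cdots (g+1)$, so $p \mid (g+1)!$.
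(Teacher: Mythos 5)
Your proposal is correct and follows essentially the same route as the paper: Cauchy's Theorem produces a $p$-cyclic subgroup $C$, the Riemann--Hurwitz formula for $G\to G/C$ is organized by separating totally (horizontally) ramified branch points from purely vertically ramified ones, and the case split on $g_0$ and $s$ yields $p\le g+1$ in every branch. The only cosmetic difference is that at a horizontally ramified branch point the paper observes that the fiber is a single $C$-fixed vertex and hence (loops being disallowed) carries \emph{no} vertical edges, whereas you allow a nonnegative multiple of $p$ there; since such terms are nonnegative and divisible by $p$, none of your inequalities or divisibility conclusions are affected.
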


We now prove the analogue of Lemma 10 from \cite{A}:

\begin{lemma}\label{unram}
Let $g_1>1$ and set $g-1=p(g_1-1)$ for a prime number $p$. Suppose that $C$ is a cyclic group of order $p$ acting harmonically on a graph $G$ of genus $g$. If $p>g_1+1$, then the $p$-cyclic harmonic cover $G\rightarrow G/C$ is horizontally unramified.
\end{lemma}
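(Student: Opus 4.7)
The plan is to set up the Riemann--Hurwitz formula from the preceding paragraphs with the given numerical constraint and then derive a contradiction from assuming $s \geq 1$. The key observation is that the hypothesis $g-1 = p(g_1-1)$ makes the left-hand side of Riemann--Hurwitz visibly divisible by $p$, which forces a divisibility condition on $s$ that becomes incompatible with the bound $p > g_1 + 1$.

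Concretely, I would begin by plugging $2g - 2 = 2p(g_1 - 1)$ into the Riemann--Hurwitz formula for the $p$-cyclic harmonic cover $G \to G/C$ displayed at the start of Section~\ref{Primes}:
$$
2p(g_1 - 1) = p(2g_0 - 2) + 2s(p-1) + p\sum_{j=1}^t w_j.
$$
Rearranging to isolate the $s$-contribution gives
$$
2s(p-1) = p\Bigl[2(g_1 - g_0) - \sum_{j=1}^t w_j\Bigr],
$$
so $p \mid 2s(p-1)$, and since $\gcd(p, p-1) = 1$, we get $p \mid 2s$.

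Next I would dispense with $p = 2$: the hypothesis $p > g_1 + 1$ together with $g_1 > 1$ forces $p \geq 3$, so this case is vacuous. With $p$ odd, $p \mid 2s$ improves to $p \mid s$, so assuming $s \geq 1$ for contradiction gives $s \geq p$. Substituting this back into Riemann--Hurwitz, and using the crude bounds $g_0 \geq 0$ (so $2g_0 - 2 \geq -2$) and $\sum w_j \geq 0$, yields
$$
2p(g_1 - 1) \;\geq\; -2p + 2p(p-1) \;=\; 2p(p - 2),
$$
which simplifies to $g_1 + 1 \geq p$, directly contradicting the hypothesis $p > g_1 + 1$. Therefore $s = 0$, i.e.\ the cover is horizontally unramified.

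I do not expect any serious obstacle here: once one notices that $g - 1 = p(g_1 - 1)$ produces the clean divisibility $p \mid 2s$, everything else is a careful but routine manipulation of Riemann--Hurwitz, exactly parallel to the style of the classical Accola argument in \cite{A}. The only small subtlety worth flagging is the separate verification that $p = 2$ cannot occur under the hypotheses, which is needed to promote $p \mid 2s$ to $p \mid s$.
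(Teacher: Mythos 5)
Your proposal is correct and follows essentially the same route as the paper: both rearrange Riemann--Hurwitz using $g-1=p(g_1-1)$ to deduce $p\mid s$ (using $p>g_1+1>2$ to rule out $p=2$), then derive $p\le g_1+1$ from $s\ge p$, contradicting the hypothesis. The only cosmetic difference is that the paper divides the rearranged identity by $2s$ while you substitute $s\ge p$ back into Riemann--Hurwitz with the crude bounds $g_0\ge 0$ and $\sum w_j\ge 0$; both yield the same contradiction.
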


\begin{proof}
Riemann Hurwitz says that (using the notation above):
$$
2g-2=p(2g_0-2)+2s(p-1)+p\sum_{j=1}^tw_j.
$$
Using the fact that $g-1=p(g_1-1)$ gives
$$
2p(g_1-1)=p(2g_0-2)+2s(p-1)+p\sum_{j=1}^tw_j.
$$
Subtraction now yields
$$
2s(p-1)=2p(g_1-g_0)-p\sum_{j=1}^tw_j,
$$
which implies that $p$ divides $s$ since $p>g_1+1>2$.
If $g_1=g_0$, then we see that the map must be unramified. So suppose that $g_1>g_0$ (so the map \emph{is} ramified). If there were horizontal ramification, then $s>0$, and it would follow that 
$$
p-1 = \frac{p}{s}(g_1-g_0)-\frac{p}{2s}\sum_{j=1}^tw_j\le g_1,
$$
so $p\le g_1+1$, contradicting our hypothesis on $p$. Thus, the map is horizontally unramified as claimed.
\end{proof}

\begin{lemma}\label{lowgenus}
Suppose that $\Gamma$ acts harmonically on a graph $G$ of genus $g>2$, and that $C<\Gamma$ is a normal $p$-cyclic subgroup for a prime $p<g$. Furthermore, suppose that the quotient map $G\rightarrow G/C$ is horizontally unramified. If $g(G/C)\le 1$, then $|\Gamma|\le 4(g-1)$.
\end{lemma}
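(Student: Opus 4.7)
The plan is to apply Riemann--Hurwitz directly to $G\to G/\Gamma$, write $|\Gamma|=(2g-2)/(2h-2+R)$ with $h:=g(G/\Gamma)$ and $R$ the ramification sum, and push $R\ge 5/2$ so that $|\Gamma|\le 4(g-1)$. First I would observe that, since $C$ is normal in $\Gamma$, the induced quotient $G/C\to G/\Gamma$ is harmonic and non-degenerate (by the remark following Proposition~\ref{2outof3}), so Riemann--Hurwitz for that map forces $h\le g(G/C)\le 1$. If $h=1$, then $R=0$ would force $g=1$, so $R\ge 1$ (each branch point contributes at least $1$, as in the proof of Proposition~\ref{R<2}) and $|\Gamma|\le 2(g-1)\le 4(g-1)$. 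The substantive case is therefore $h=0$, where $R>2$.

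Next I would show that $R=7/3$ is the only value $R$ can take in $(2,5/2)$. This is a small enumeration on the possible branch contributions $2(1-1/r_y)+w_y$: single branch-point values in $(2,5/2)$ occur only for $(r,w)=(3,1)$, and the only two-branch sum in $(2,5/2)$ is $1+4/3=7/3$ (the next two-branch sum is $1+3/2=5/2$, and three or more branch points give $R\ge 3$). So it suffices to rule out the three configurations of Proposition~\ref{R>2}, each of which contains a branch point with $r_y=3$.

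For such a branch point $y$, pick a preimage $x\in G$ with $|\Gamma_x|=3$. The intersection $\Gamma_x\cap C$ is a subgroup of the prime-order group $\Gamma_x$, so is either trivial or all of $\Gamma_x$; the latter would force $p=3$ and $C=\Gamma_x$ to fix $x$, contradicting the horizontal unramification of $G\to G/C$. Thus $\Gamma_x$ injects into $\Gamma/C$, landing inside the stabilizer of $\bar x\in G/C$, so $3 \mid |(\Gamma/C)_{\bar x}|$.

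Finally I would invoke the classifications from Sections~\ref{g=0} and~\ref{g=1} applied to the harmonic action of $\Gamma/C$ on $G/C$. If $g(G/C)=1$, then $\Gamma/C$ sits inside the $D_n$-action on a decorated $n$-cycle, and a direct inspection (cycle vertices have reflection stabilizer of order $2$, while the two tree copies at each cycle vertex are swapped by that reflection, so tree vertices have trivial stabilizer) shows every vertex stabilizer has order $\le 2$, contradicting $3\mid |(\Gamma/C)_{\bar x}|$. If $g(G/C)=0$, then $\Gamma/C$ is either $\mathbb{Z}/2\mathbb{Z}$ (too small to admit a stabilizer of order $3$) or a group of order $r$ fixing a central vertex $\bar z$ and freely permuting $r$ isomorphic branches; then a nontrivial $(\Gamma/C)_{\bar x}$ forces $\bar x=\bar z$ and $r\ge 3$, and since $\Gamma_z\cap C=1$, orbit--stabilizer gives $|\Gamma_z|=r$, which the cap of stabilizer orders in the $R=7/3$ configurations constrains to $r\le 3$. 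Thus $r=3$ and $|\Gamma|=3p=6(g-1)$, so $p=2(g-1)$---which is prime only when $g=2$, contradicting $g>2$. The main obstacle is this final tree sub-case: the local stabilizer data alone are compatible with all the hypotheses, and the contradiction only emerges from combining the rigid central-vertex structure of the tree action with the primality of $2(g-1)$.
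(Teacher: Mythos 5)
Your proof is correct, but it takes a genuinely different route from the paper's. The paper fixes the intermediate cover $G\rightarrow G/C$, reduces to $k:=|\Gamma/C|>4$ (using $p<g$ to dispose of $k\le 4$), and in each of the cases $g(G/C)=1$ and $g(G/C)=0$ counts the branch points $t$ of the vertically ramified cover $G\rightarrow G/C$ from below in terms of $k$ --- using that $\Gamma/C$ permutes the branch locus in orbits of size at least $k/2$ (dihedral model) or $k$ (central-vertex tree model) --- and then feeds $t$ into Riemann--Hurwitz for $G\rightarrow G/C$ to get $pk\le 4(g-1)$ directly. You instead work with $G\rightarrow G/\Gamma$: you observe that the only ramification total in the interval $(2,\tfrac{5}{2})$ is $R=\tfrac{7}{3}$ (in effect anticipating Proposition~\ref{2values}), note that every $R=\tfrac{7}{3}$ configuration forces a vertex $x$ with $|\Gamma_x|=3$, and then use horizontal unramifiedness of $G\rightarrow G/C$ to show $\Gamma_x\cap C=1$, so that the order-$3$ stabilizer descends injectively to $(\Gamma/C)_{\bar x}$ --- which is impossible in the genus-$1$ model (stabilizers of order at most $2$) and, in the genus-$0$ model, forces $\bar x$ to be the central vertex, $k=3$, and hence $p=2(g-1)$, which is not prime for $g>2$. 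Both arguments lean on the same structural classifications from sections~\ref{g=0} and~\ref{g=1}, so neither is more elementary there; what yours buys is a cleaner conceptual statement (``under these hypotheses the pair $(G,\Gamma)$ cannot be a Hurwitz pair, and anything short of the Hurwitz configuration already satisfies $|\Gamma|\le 4(g-1)$'') and independence from the hypothesis $p<g$, while the paper's orbit-counting on the branch locus of $G\rightarrow G/C$ avoids the enumeration of ramification vectors with $R\in(2,\tfrac52)$. One small remark: your final step could be shortened by noting that $k=3$ together with $p\le g-1$ gives $|\Gamma|=3p\le 3(g-1)$ immediately, without invoking primality of $2(g-1)$.
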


\begin{proof}
Set $k=|\Gamma/C|$, so that $|\Gamma|=pk$. Note that if $k\le 4$, then $|\Gamma|=kp\le 4(g-1)$ by assumption. So assume that $k>4$. First consider the case $g(G/C)=1$. Since $g> 2$, the $p$-cyclic cover $G\rightarrow G/C$ must be ramified, and by assumption all ramification is vertical. So Riemann-Hurwitz says that 
$2g-2=p\sum_{j=1}^t w_j\ge pt$. But $\Gamma/C$ acts harmonically on $G/C$, and it must permute the branch locus of the cover $G\rightarrow G/C$. From section~\ref{g=1} we see that $\Gamma/C$ is isomorphic to a subgroup of a dihedral group, and that the stabilizers of its action on $G/C$ have order at most 2. So the branch locus of $G\rightarrow G/C$ consists of a disjoint union of $\Gamma/C$-orbits, each of size at least $\frac{k}{2}$. It follows that $t\ge\frac{k}{2}$, so that $4g-4=2(2g-2)\ge 2pt\ge 2p\frac{k}{2}=pk=|\Gamma|$ as claimed.

Now suppose that $g(G/C)=0$. Then $\Gamma/C$ acts harmonically on the tree $G/C$, and from section~\ref{g=0} we see that if $G/C$ does not have a central vertex with isomorphic branches, then $k=2$, contrary to our assumption $k>4$. So assume that $G/C$ does have a central vertex with isomorphic branches, and consider the vertically ramified $p$-cyclic cover $G\rightarrow G/C$. First suppose that only the center point of $G/C$ is ramified in the cover. Then the fiber over the center point forms a single $\Gamma$-orbit, so each of its $p$ vertices has a stabilizer of order $k$ under the action of $\Gamma$. But this means that the map $G\rightarrow G/\Gamma$ is branched at a single point, with $r=k$ and $w\ge 1$. Riemann-Hurwitz yields $2g-2=pk(-2+2(1-\frac{1}{k})+w)=-2p+pkw\ge pk-2p$. Multiplying by 2  and using $k>4$ then gives $4g-4\ge 2pk-4p>2pk-pk=pk=|\Gamma|.$

If the branch locus of $G\rightarrow G/C$ contains points other than the central point, then the set of non-central branch points forms a disjoint union of $\Gamma/C$-orbits, each of size $k$. In particular, we have $4< k\le t$, where $t$ is the number of branch points. Riemann-Hurwitz says that $2g-2=-2p+p\sum_{j=1}^tw_j\ge -2p+pt=p(t-2)$. It follows that $|\Gamma|=pk\le pt=2pt-pt< 2pt-4p=2p(t-2)\le 4g-4$ as required.

\end{proof}

Finally, we prove an analogue of Theorem 1 from \cite{A}. The new wrinkle in the graph-theoretic scenario is the possibility of vertical ramification.

\begin{proposition}\label{psmall}
Let $g_1>1$ and $p$ be a prime satisfying the following two conditions:
\begin{itemize}
\item[(i)] $p> 6(g_1-1)$
\item[(ii)] $(\frac{p-1}{2}, (g_1+1)!)=1$.
\end{itemize}
Set $g-1=p(g_1-1)$ and suppose that $G$ is a graph of genus $g$ admitting a group $\Gamma$ of automorphisms acting harmonically. If $p$ divides $|\Gamma|$, then $|\Gamma|\le4(g-1)$.
\end{proposition}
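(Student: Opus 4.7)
The proof plan proceeds in four reductions, following Accola's strategy \cite{A} adapted to the graph setting.

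First, Cauchy's theorem supplies a subgroup $C = \langle\sigma\rangle \le \Gamma$ of order $p$. Combining $|\Gamma|\le 6(g-1) = 6p(g_1-1)$ (from the Hurwitz bound in section~\ref{Hurwitz}) with hypothesis~(i) gives $|\Gamma|/p < p$, so $C$ is the unique Sylow $p$-subgroup of $\Gamma$; if $C$ failed to be normal, the Sylow theorems would produce at least $p+1$ conjugates and hence $|\Gamma| \ge p(p+1)$, again contradicting the Hurwitz bound. Thus $C \triangleleft \Gamma$. Moreover hypothesis~(i) yields $p > 6(g_1-1) \ge g_1+1$ (using $g_1 \ge 2$), so Lemma~\ref{unram} applies and $G \to G/C$ is horizontally unramified. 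If $g_0 := g(G/C) \le 1$, Lemma~\ref{lowgenus} directly gives $|\Gamma| \le 4(g-1)$; so we reduce to the case $g_0 \ge 2$, in which $\Gamma/C$ acts harmonically on $G/C$ by the consequence of Proposition~\ref{2outof3}.

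Let $Z := Z_\Gamma(C)$. Conjugation embeds $\Gamma/Z$ into $\Aut(C) \cong \mathbb{Z}/(p-1)\mathbb{Z}$. By Lemma~\ref{factorial} applied to the $\Gamma/C$-action on $G/C$, every prime factor of $|\Gamma/C|$---and hence of $|\Gamma/Z|$---divides $(g_0+1)!$ and therefore also $(g_1+1)!$. Hypothesis~(ii) forces $(p-1)/2$ to be odd and coprime to $(g_1+1)!$: an odd prime dividing $|\Gamma/Z|$ would then divide $\gcd((p-1)/2,(g_1+1)!)=1$, impossible, while $4 \nmid p-1$ since $(p-1)/2$ is odd. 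Therefore $|\Gamma/Z| \in \{1,2\}$. Next, applying Riemann-Hurwitz to the $|Z/C|$-fold cover $G/C \to G/Z$ induced by the $Z/C$-action on the genus-$g_0$ graph $G/C$ yields
\[
2(g_0-1) \;=\; |Z/C|\bigl(2(g_2-1) + R''\bigr),
\]
where $g_2 := g(G/Z)$. When $g_2 \ge 1$ we get $|Z/C| \le 2(g_0-1) \le 2(g_1-1)$, and combining with $|\Gamma| = |\Gamma/Z|\cdot p\cdot |Z/C|$ and $|\Gamma/Z| \le 2$ produces $|\Gamma| \le 4(g-1)$ as desired.

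I expect the hardest step to be the remaining subcase $g_2 = 0$. There $G/\Gamma$ is also a tree (a further $\Gamma/Z$-quotient, of order at most $2$, of the tree $G/Z$), Riemann-Hurwitz reads $2(g-1)=|\Gamma|(R-2)$, and the desired bound $|\Gamma|\le 4(g-1)$ is equivalent to $R \ge 5/2$, so by Proposition~\ref{R>2} it suffices to exclude $R=7/3$. Each of the three configurations yielding $R = 7/3$ forces a vertex stabilizer of order~3; hypothesis~(ii) implies $3 \nmid p-1$, so no element of $\Aut(C)$ has order~3 and any $\tau \in \Gamma$ of order~3 must centralize $C$. By commutativity the $C$-orbit of any $\tau$-fixed vertex lies entirely in $\mathrm{Fix}(\tau)$, giving $|\mathrm{Fix}(\tau)|\ge p$; feeding this into Riemann-Hurwitz for $\langle \tau \rangle$ acting on $G$ (and, where needed, for the cyclic group $\langle\tau\rangle\times C$ of order $3p$, together with the constraint that the resulting quotient must be consistent with the $\Gamma/C$-equivariance of the unramified cover $G\to G/C$) and exploiting hypothesis~(i), which makes $p$ large relative to $g_1$, will produce the contradiction that completes the proof.
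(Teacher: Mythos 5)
Your first two reductions match the paper's proof essentially verbatim: the normal Sylow $p$-subgroup $C$, Lemma~\ref{unram}, the dispatch of $g(G/C)\le 1$ via Lemma~\ref{lowgenus}, and the use of Lemma~\ref{factorial} together with hypothesis (ii) to show that the centralizer $Z$ of $C$ in $\Gamma$ (the paper's kernel $H$ of $\Gamma\to\Aut(C)$) has index $1$ or $2$. Your treatment of the case $g(G/Z)\ge 1$, via Riemann--Hurwitz applied to $G/C\to G/Z$, is correct and in fact a little more direct than the paper, which at this stage splits the coprime central extension as $H=C\times N$ and works with the tower $G\to G/N\to G/H$.

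The genuine gap is the case $g(G/Z)=0$, which you rightly flag as the hard one but do not prove. Your reduction to excluding $R=7/3$ is fine, and it is true that each of the three configurations of Proposition~\ref{R>2} produces an element $\tau$ of order $3$ fixing a vertex, that $\tau$ centralizes $C$ (since $3\nmid p-1$ by hypothesis (ii)), and hence that $|\mathrm{Fix}(\tau)|\ge p$. But the Riemann--Hurwitz computation you propose to finish with does not yield a contradiction except when $g_1=2$: it gives $2g-2\ge 3(2g(G/\langle\tau\rangle)-2)+4|\mathrm{Fix}(\tau)|\ge -6+4p$, i.e.\ $2p(g_1-1)\ge 4p-6$, which fails for $g_1=2$ (forcing $p\le 3<6(g_1-1)$) but holds automatically for every $g_1\ge 3$. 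Replacing $\langle\tau\rangle$ by the cyclic group $\langle\tau\rangle\times C$ of order $3p$ only gives $2g-2\ge 3p\bigl(-2+\tfrac{4}{3}\bigr)$, which is vacuous. So the promised contradiction is simply not there for general $g_1$, and since the statement is claimed for all $g_1>1$, this is a real hole. The paper closes this case by a different mechanism: it splits $H=C\times N$, shows the $p$-cyclic cover $G/N\to G/H$ is horizontally unramified, and analyzes $g(G/N)$; the key facts are that $p$ divides $g(G/N)-1$ (so $g(G/N)>1$ forces $p\le g(G/N)-1$, which bounds $|\Gamma|$ via Riemann--Hurwitz for $G\to G/N$), and that when $g(G/N)=1$ the branch locus of $G\to G/N$ is a union of $C$-orbits, so its ramification number is divisible by $p$ and again $|\Gamma|\le 4(g-1)$. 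You would need an argument of this kind, or some other new idea, to handle $g_1\ge 3$.
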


\begin{proof}
Suppose that $p \ | \ |\Gamma|$. By the Hurwitz genus bound we have $|\Gamma|\le 6(g-1)=6p(g_1-1)$, so that
$$
k:=\frac{|\Gamma|}{p}\le 6(g_1-1)<p.
$$
Thus, we have $|\Gamma|=pk$ with $p>k$. By the Sylow Theorems, $G$ has a unique (normal) cyclic $p$-Sylow subgroup $C$. Moreover, $p>6(g_1-1)>g_1+1$, so by Lemma~\ref{unram}, the map $G\rightarrow G/C$ is horizontally unramified. Riemann-Hurwitz then says that $2p(g_1-1)=2g-2=p(2g(G/C)-2)+p\sum_{j=1}^tw_j\ge p(2g(G/C)-2)$, so that $g(G/C)\le g_1$. If $g(G/C)\le 1$, then Lemma~\ref{lowgenus} says that $|\Gamma|\le 4(g-1)$, so we may assume that $g(G/C)>1$.

Now the quotient group $K:=\Gamma/C$ of order $k$ acts harmonically on $G/C$, so Lemma~\ref{factorial} says that all prime factors of $k$ must divide $(g(G/C)+1)!$, a fortiori $(g_1+1)!$. But by assumption $(ii)$ on the prime $p$, we conclude that $\frac{p-1}{2}$ is relatively prime to $k$, which implies that $(p-1,k)=1,2$. 

Consider the map $\Gamma\rightarrow\Aut(C)\cong\mathbb{Z}/(p-1)\mathbb{Z}$, defined by sending an element $\gamma\in \Gamma$ to the automorphism of $C$ given by conjugation by $\gamma$. If $H$ is the kernel of this map, then $H$ is a normal subgroup of $\Gamma$ containing $C$ in its center. Now $|\Gamma/H|$ divides both $|\Gamma|=pk$, and $|\Aut(C)|=p-1$, so that $|\Gamma/H|$ divides $(p-1,k)=1,2$. This shows that $H$ has index 1 or 2 in $\Gamma$. Hence, we will be finished if we can show that $|H|\le 2g-2$. 

Now $H$ is a central extension of $H/C$ by $C$. But these groups have relatively prime orders, so the extension must be split. Hence, $H=C\times N$, where $N$ is a normal complement to $C$ in $H$. I claim that the $p$-cyclic harmonic cover $G/N\rightarrow G/H$ is horizontally unramified. Indeed, if $y\in G/N$ is a horizontally ramified point, then it must be totally ramified, so that the image of a generator $c\in C$ fixes $y$. Let $x\in G$ be a preimage of the point $y$ under the map $G\rightarrow G/N$. Then $c$ must act on the orbit $Nx\subset V(G)$, which has cardinality prime to $p$ (it is a factor of $k$). Hence, $c$ must act trivially on $Nx$, which means that the action of $c$ on $G$ has fixed points. But then $G\rightarrow G/C$ is horizontally ramified, which is a contradiction.  So the map $G/N\rightarrow G/H$ can have at most vertical ramification. 

If $G/N\rightarrow G/H$ is unramified, then $2g(G/N)-2=p(2g(G/H)-2)$, which implies that $g(G/H)\ge 1$. From Section~\ref{Hurwitz} we see that $|H|\le 2g-2$. Now suppose that $G/N\rightarrow G/H$ is vertically ramified. If $g(G/H)>0$, then as before we have $|H|\le 2g-2$. So suppose on the contrary that $g(G/H)=0$. Then Riemann-Hurwitz says that
$$
2(g(G/N)-1)=-2p+p\sum_{j=1}^tw_j.
$$
In particular, we see that $p$ divides $g(G/N)-1$. First suppose that $g(G/N)>1$. Then we must have $p\le g(G/N)-1$. Now since $|N|=k$ or $\frac{k}{2}$, Riemann-Hurwitz applied to the map $G\rightarrow G/N$ says that 
$$
2(g-1)=2|N|(g(G/N)-1)+|N|R\ge k(g(G/N)-1).
$$
Multiplying by $p$ and dividing by $g(G/N)-1$ yields
$$
|\Gamma|=pk\le\frac{2p(g-1)}{g(G/N)-1}\le 2g-2.
$$

Now consider the case $g(G/H)=0$ and $g(G/N)=1$. Then the vertically ramified map $G/N\rightarrow G/H$ must have $\sum_{j=1}^tw_j=2$. If there are two branch points, then reviewing the options for harmonic actions on genus 1 graphs (Section~\ref{g=1}), we see that the only possibility is $p=2$, which is excluded by our assumption that $p>6(g_1-1)$. Hence, there is a single branch point, and we see that $G/N$ is a $p$-cycle decorated with a tree $T$ at each vertex. 

Now consider the map $G\rightarrow G/N$. Then Riemann-Hurwitz says that $2g-2=|N|R$. Since the $C$-action on $G/N$ lifts to a $C$-action on $G$, we see that the branch locus is permuted by $C$. Hence the branch locus is a disjoint union of $C$-orbits, each consisting of $p$ branch points with the same ramification. It follows that $R=pR'$ for some $R'\ge 1$. But then
$$
4(g-1)=2(2g-2)=2|N|R=2|N|pR' \ge 2\frac{k}{2}p=kp=|\Gamma|.
$$
Thus, in all cases we see that $|\Gamma|\le 4(g-1)$.
\end{proof}

\section{Sharpness of the Lower Bound}\label{LowerSharp}

We begin with a somewhat surprising proposition, which implies in particular that the function $M(g)$ only takes the values $4(g-1)$ and $6(g-1)$:

\begin{proposition}\label{2values}
Let $G$ be a graph of genus $g\ge 2$, and $\Gamma$ a group acting harmonically on $G$, with order $|\Gamma|>4(g-1)$. Then $|\Gamma|=6(g-1)$, i.e. the pair $(G,\Gamma)$ achieves the Hurwitz genus bound from section~\ref{Hurwitz}.
\end{proposition}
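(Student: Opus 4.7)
The plan is to parlay the hypothesis $|\Gamma| > 4(g-1)$, via the Riemann--Hurwitz formula $|\Gamma|(2g(G/\Gamma) - 2 + R) = 2(g-1)$, into severe restrictions on both the quotient genus $g(G/\Gamma)$ and the ramification number $R$, and then enumerate the few ramification configurations that survive.

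First I would rule out $g(G/\Gamma) \ge 1$. The analysis opening section~\ref{Hurwitz} shows that in that case $|\Gamma| \le 2(g-1)$, which for $g \ge 2$ contradicts $|\Gamma| > 4(g-1)$. So $g(G/\Gamma) = 0$, the formula becomes $|\Gamma|(R - 2) = 2(g-1)$, hence $R > 2$, and Proposition~\ref{R>2} gives $R \ge 7/3$. On the other hand, rewriting the hypothesis as $2(g-1)/(R-2) > 4(g-1)$ forces $R - 2 < 1/2$, i.e.\ $R < 5/2$. It is therefore enough to show that the only value $R$ can take in $[7/3, 5/2)$ is $7/3$ itself, since that gives $|\Gamma| = 2(g-1)/(1/3) = 6(g-1)$.

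To pin down $R$, recall that each branch point $y \in G/\Gamma$ contributes $c_y := 2(1 - 1/r_y) + w_y \ge 1$, with equality exactly when $(r_y, w_y) \in \{(1,1),(2,0)\}$. Consequently three or more branch points give $R \ge 3 > 5/2$ and can be dismissed. The achievable values of $c_y$ strictly less than $3/2$ are exactly $1$ and $4/3$ (the latter from $(r_y, w_y) = (3,0)$); all other achievable values of $c_y$ are at least $3/2$.

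With this in hand, the remaining obstacle is a short case check, which I expect to be the only real work. For a single branch point, writing $R = 2 - 2/r + w$ and running through $r = 1, 2, 3, \ldots$ with small $w$ shows that no $(r,w)$ yields a value in the open interval $(7/3, 5/2)$; the value $7/3$ itself is realized only by $(r,w) = (3,1)$. For two branch points with $c_1 + c_2 \in [7/3, 5/2)$, one cannot have both $c_i \ge 3/2$ (else $R \ge 3$), so at least one contribution, say $c_1$, lies in $\{1, 4/3\}$; a direct check of the two subcases forces $c_2 \in \{4/3, 1\}$ respectively, giving $R = 7/3$ in both instances (recovering the two-point configurations of Proposition~\ref{R>2}). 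This exhausts the possibilities in $[7/3, 5/2)$, so $R = 7/3$ and $|\Gamma| = 6(g-1)$ as required.
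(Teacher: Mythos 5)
Your proposal is correct and follows essentially the same route as the paper: both derive $g(G/\Gamma)=0$ from the section~\ref{Hurwitz} analysis, use the hypothesis to get $R<\frac{5}{2}$ and the ramification condition to get $R\ge\frac{7}{3}$, and then enumerate the per-branch-point contributions to show $R=\frac{7}{3}$ is the only possibility. The only difference is cosmetic — you organize the case check by the achievable values of $c_y<\frac{3}{2}$, while the paper splits by the presence of horizontal ramification at each branch point — and both recover exactly the three configurations of Proposition~\ref{R>2}.
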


\begin{proof}
Note that the discussion in section~\ref{Hurwitz} implies that $g(G/\Gamma)=0$.
We proceed by determining the possibilities for the ramification in the map $G\rightarrow G/\Gamma$. Riemann-Hurwitz says that
\begin{eqnarray*}
2g-2=|\Gamma|(-2+R)
>(4g-4)(-2+R),
\end{eqnarray*}
which implies that
$$
\frac{5}{2}> R=\sum_{i=1}^s[2(1-\frac{1}{r_i})+w_i].
$$

Since the map must be ramified, we conclude that $s=1$ or $s=2$. Moreover, $g(G)\ge 2$ implies that $R>2$, so the map must have horizontal ramification. Consider the case of two branch points, one of which has only vertical ramification in the fiber. Then $2<R=2(1-\frac{1}{r_1})+w_1+w_2<\frac{5}{2}$ implies that 
$(r_1,r_2; w_1,w_2)=(3,1;0, 1)$. This corresponds to case 3 of Proposition~\ref{R>2}, and we see that $R=\frac{7}{3}$, so that $|\Gamma|=6(g-1)$ as claimed.
If both branch points have horizontal ramification, the condition $2<R=2(1-\frac{1}{r_1})+2(1-\frac{1}{r_2})+w_1+w_2<\frac{5}{2}$ implies that $(r_1,r_2;w_1,w_2)=(3,2;0,0)$ which is case 2 of Proposition~\ref{R>2}. Again, this implies that $|\Gamma|=6(g-1)$. Finally, the case of a single branch point yields case 1 of Proposition~\ref{R>2}. Hence, $R=\frac{7}{3}$ and $|\Gamma|=6(g-1)$ in all cases. 

\end{proof}

We now complete the proof of Theorem~\ref{Main} with the following proposition.

\begin{proposition} 
The lower bound in Theorem~\ref{Main} is sharp: there exist infinitely many $g$ such that $M(g)=4(g-1)$.
\end{proposition}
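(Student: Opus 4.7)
The plan is to combine the lower bound from Section \ref{Lower} (which is valid for all $g \ge 3$) with a contradiction argument: if some harmonic action satisfies $|\Gamma| > 4(g-1)$, then Proposition~\ref{2values} forces $|\Gamma| = 6(g-1)$, and Proposition~\ref{psmall} will then give $|\Gamma| \le 4(g-1)$, a contradiction. The task thus reduces to producing infinitely many genera $g$ for which the hypotheses of Proposition~\ref{psmall} can be arranged so that $p$ necessarily divides any candidate $|\Gamma|=6(g-1)$.

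The natural choice is to take $g_1 = 2$ and $g = p+1$ for suitable primes $p$. Then $(g_1+1)! = 6$, condition (i) of Proposition~\ref{psmall} becomes $p > 6$, and condition (ii) becomes $\gcd(\tfrac{p-1}{2},6)=1$. This latter condition is equivalent to requiring $\tfrac{p-1}{2}$ to be coprime to both $2$ and $3$, i.e. $p \equiv 3 \pmod 4$ and $p \equiv 2 \pmod 3$; by the Chinese Remainder Theorem this is exactly $p \equiv 11 \pmod{12}$. By Dirichlet's theorem on primes in arithmetic progressions, there are infinitely many such primes $p$, each automatically satisfying $p > 6$. For each such $p$, set $g = p+1$.

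Now suppose $\Gamma$ acts harmonically on a graph $G$ of genus $g = p+1$ with $|\Gamma| > 4(g-1) = 4p$. Proposition~\ref{2values} yields $|\Gamma| = 6(g-1) = 6p$, which is divisible by $p$. Since we have arranged $p$ to satisfy both hypotheses of Proposition~\ref{psmall} (with $g_1 = 2$), that proposition forces $|\Gamma| \le 4(g-1) = 4p$, a contradiction. Hence $M(g) \le 4(g-1)$. Combined with the lower bound from Section~\ref{Lower} (applicable since $g = p+1 \ge 12 \ge 3$), this gives $M(g) = 4(g-1)$ for each such $g$, producing infinitely many values as required.

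The only nontrivial point is the arithmetic one of arranging conditions (i) and (ii) of Proposition~\ref{psmall} simultaneously; choosing $g_1 = 2$ reduces this to a congruence condition modulo $12$, at which point Dirichlet's theorem handles the rest. Everything else is a direct invocation of results already established in the paper.
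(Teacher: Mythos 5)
Your proposal is correct and follows essentially the same route as the paper: take $g_1=2$, reduce the hypotheses of Proposition~\ref{psmall} to the congruence $p\equiv 11\pmod{12}$ (the paper writes this as $p=12k-1$), invoke Dirichlet's theorem, and derive a contradiction between Proposition~\ref{2values} and Proposition~\ref{psmall} for any action of size exceeding $4(g-1)$. The only cosmetic difference is that you make explicit the appeal to the Section~\ref{Lower} construction for the matching lower bound, which the paper leaves implicit.
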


\begin{proof}
Fix $g_1>1$ and suppose that $p$ is a prime satisfying the conditions of Proposition~\ref{psmall}, $G$ is a graph of genus $g=p(g_1-1)+1$, and $\Gamma$ is a group acting harmonically on $G$. Then $|\Gamma|\le 4(g-1)$, for if not, then Proposition~\ref{2values} would imply that $|\Gamma|=6(g-1)=6p(g_1-1)$, and $p$ would divide $|\Gamma|$, contradicting Proposition~\ref{psmall}. 

So in the situation of Proposition~\ref{psmall}, we have $M(g)=4(g-1)$. I claim that there are infinitely many such $g$. Indeed, set $g_1=2$, so that we will be considering prime numbers $p$ and $g=p+1$. In this case the hypotheses  of Proposition~\ref{psmall} require $p>6$ and $(\frac{p-1}{2},6)=1$. That is, $\frac{p-1}{2}$ must be odd and not divisible by $3$. But this means that $\frac{p-1}{2}=6k\pm 1$, or $p-1=12k\pm 2$, or finally $p=12k-1$ (the number $12k+3$ is never prime and greater than 6). By Dirichlet's Theorem, there are infinitely many primes of the form $12k-1$, which means that there are infinitely many $g$'s of the form $12k$ such that $g-1=p$ is prime. For all such $g$'s we have $M(g)=4(g-1)$, and the lower bound is sharp. 
\end{proof}

\end{document}